\theoremstyle{definition}
\newtheorem{Thrm}{Theorem}[section]
\newtheorem{Cor}[Thrm]{Corollary}
\newtheorem{Fact}[Thrm]{Fact}
\newtheorem{Lem}[Thrm]{Lemma}
\newtheorem{Prop}[Thrm]{Proposition}
\newtheorem{Def}[Thrm]{Definition}
\newtheorem{Not}[Thrm]{Notation}
\newtheorem{Rem}[Thrm]{Remark}
\newcommand{\PS}{\mathbb{P}}
\newcommand{\Z}{\mathbb{Z}}
\newcommand{\Fc}{\mathcal{F}}
\newcommand{\Lc}{\mathcal{L}}
\newcommand{\Mc}{\mathcal{M}}
\newcommand{\Nc}{\mathcal{N}}
\newcommand{\Oc}{\mathcal{O}}
\newcommand{\Pc}{\mathcal{P}}
\newcommand{\Tc}{\mathcal{T}}
\newcommand{\Qc}{\mathcal{Q}}
\newcommand{\ch}{\mathrm{ch}\,}
\newcommand{\chn}{\mathrm{ch}}
\newcommand{\cn}{\mathrm{c}}
\newcommand{\Coh}{\mathrm{Coh}\,}
\newcommand{\Chown}{\mathrm{A}}
\newcommand{\Ext}{\mathrm{Ext}}
\newcommand{\fdeg}{\mathrm{fdeg}\,}
\newcommand{\FM}{\mathrm{FM}}
\newcommand{\FMXY}{\mathrm{FM}_{X \to Y}}
\newcommand{\FMYX}{\mathrm{FM}_{Y \to X}}
\newcommand{\Lab}{\mathrm{L}^0}
\newcommand{\NS}{\mathrm{NS}\,}
\newcommand{\Pic}{\mathrm{Pic}\,}
\newcommand{\Picnot}{\mathrm{Pic}^\circ\,}
\newcommand{\Rab}{\mathrm{R}^0}
\newcommand{\rk}{\mathrm{rk}\,}
\newcommand{\SLZ}{\mathrm{SL}_2\Z}
\newcommand{\td}{\mathrm{td}}
\newcommand{\Chow}{\mathrm{A}^\bullet}
\newcommand{\K}{\mathrm{K}_0\,}
\newcommand{\Kany}{\mathrm{K}_0^*\,}
\newcommand{\Kor}{\mathrm{K}_0^{\mathrm{or}}\,}
\newcommand{\Kt}{\mathrm{K}_0^{\mathrm{t}}\,}
\newcommand{\vor}{^{\mathrm{or}}}
\newcommand{\vt}{^{\mathrm{t}}}
\newcommand{\Db}{\mathrm{D^b}}
\newcommand{\Hom}[2]{\mathrm{Hom}_{#1} \left( #2 \right)}
\newcommand{\Hrm}{\mathrm{H}}
\newcommand{\RGamma}{\mathrm{R}\Gamma}
\newcommand{\RHom}[2]{\mathrm{RHom}_{#1} \left( #2 \right)}
\newcommand{\deq}{\overset{\text{def}}{=}}
\newcommand{\fprod}[1]{\underset{#1}{\times}}
\begin{document}

\title{Strange Duality for elliptic surfaces}
\author{Svetlana Makarova} 
\date{}

\maketitle

\begin{abstract}

The main result of the present paper is the proof of the Strange Duality for elliptic surfaces --
a duality between global sections of determinantal line bundles on moduli spaces of stable sheaves on a fixed elliptic surface.
For this, we employ the ``Marian-Oprea trick'': using Bridgeland's birational isomorphisms, we reduce the problem from a pair of general moduli spaces to a pair of Hilbert schemes.
The latter case is a theorem by Marian-Oprea.
\end{abstract}

\tableofcontents

\section{Introduction}

The work on the present paper started with an attempt to strengthen the results on the Strange Duality on surfaces, and is largely motivated by the approach of Marian and Oprea \cite{MO}. 
The Strange Duality is a conjectural duality between global sections of two natural line bundles on moduli spaces of stable sheaves. It originated as a representation theoretic observation about pairs of affine Lie algebras, and then was reformulated geometrically.
In our paper, we develop the geometric approach in the spirit of Marian and Oprea. They proved the Strange Duality conjecture for Hilbert sche\-mes of points on surfaces and for moduli of sheaves on abelian surfaces \cite{MO08}, and in later work for elliptic K3 surfaces with a section \cite{MO}. The latter used birational isomorphisms of moduli spaces of stable sheaves with Hilbert schemes of points on the same K3 surface, following Bridgeland \cite{Br}, to reduce the question to the known case of Hilbert schemes.

\paragraph{Outline of the paper.}

We start with \S\ref{Sec:FM}:
in the first part, we summarize relevant results of Bridgeland \cite{Br}; and in the second part, we obtain a mild generalization of those that we use in the main body of the paper. For moduli of stable sheaves of rank at least three, we automatically get that the singular locus lives in codimension two, because the rational map is defined over the locus of vector bundles.
We next show in \S\ref{Sec:SD} that the Strange Duality holds for a range of elliptic surfaces (not necessarily K3) and K-theory classes in Theorem \ref{Thrm:SD_elliptic}, which extends the pool of evidence of the Strange Duality conjecture to elliptic surfaces.
Finally, in \S\ref{Sec:matrices}, we present new universal sheaves on the fiber square of an elliptic K3 surface with a section, in our first attempt to relativize Atiyah's construction of stable vector bundles on elliptic curves \cite{At}.

\paragraph{Conventions.}
We work over an algebraically closed field of characteristic zero.
We write $(-)^\vee$ for the derived dual of a sheaf and $-\otimes -$ for the derived tensor product.
Given a morphism of schemes $f : X \to Y$, we denote by $f_*$ and $f^*$ the derived functors of pushforward and pullback, respectively. When we want to work with the classical functors instead of derived, we write $\Lab f^*$ for the nonderived pullback and $\Rab f_*$ for nonderived pushforward.
Note however that we distinguish between $\mathrm{Hom}$ and $\mathrm{RHom}$ (because $\mathrm{Hom}$ makes sense in the derived category on its own).


We use the equality sign to denote natural isomorphisms (as well as equality).


For the moduli theory of sheaves, we generally need a way to fix a numerical characteristic of the sheaves in question in order to obtain any finiteness results.
So, for a variety $X$, we use zeroth algebraic K-theory $\K X$ and zeroth topological K-theory $\Kt X$. The latter is rationally isomorphic to the ring of even cohomology groups. 
We can also define oriented topological K-theory $\Kor X$ by fixing the determinant of a topological K-theory vector. 
We will call a vector $v$ in any K-theory $\Kany X$ a fixed
\emph{numerical characteristic}, or
\emph{K-theory class}. When we need to be specific, we will add adjectives algebraic, topological or oriented topological to refer to the corresponding variants of K-theory.

Let $\Chow X$ denote the Chow ring of a smooth projective variety $X$.
It is well-known that there is a function called Chern character $\ch : \Db X \to \Chow X$, from objects of the derived category to the Chow ring, that factors as a ring homomorphism through the Grothendieck group: $\ch : \K X \to \Chow X$. Note that the Euler pairing descends to each of the K-groups by taking representative complexes $E$ and $F$ and computing Euler characteristic of their derived tensor product:
$$\chi(E \otimes F) \deq \chi \left( \RGamma(E \otimes F) \right)
\mbox{.}$$
Moreover, this pairing is intertwined with the morphism $\ch : \K X \to \Chow X$ via Hirzebruch-Riemann-Roch.

We don't use Chern classes a lot, and instead we prefer to write a K-theory vector $v$ in terms of components of its Chern character: $\chn_0 v = \rk v$, $\chn_1 v = \cn_1 v$, $\chn_2 v = \frac{1}{2} (\cn_1 v)^2 - \cn_2 v$, etc.

\paragraph{Acknowledgements.}
First and foremost, I would like to thank my advisor Davesh Maulik for suggesting the topic and insightful discussions throughout the work.
I have benefitted from discussions related to this work with
Alina Marian (my deepest appreciation for helping me escape deadends and commenting on the paper draft),
Dragos Oprea (for the comments on the paper draft),
Valery Alexeev,
Arend Bayer,
Dori Bejleri,
Tom Bridgeland,
Elden Elmanto, 
Nikon Kurnosov,
Emanuele Macr\`i,
Eyal Markman,
Evgeny Shinder,
Kota Yoshioka,
Xiaolei Zhao.
\section{Fourier-Mukai transforms and birational isomorphisms}

\label{Sec:FM}

In this section, we will formulate the main ingredients of Bridgeland's work \cite{Br} and later use some of them to obtain mild generalizations.
\S\ref{Subsec:FM_stage} does not contain original results and instead focuses on giving a sufficient review. The reader who wants more details and references is advised to turn their attention to the original work \cite{Br}, which is a pleasant read on its own.
In \S\ref{Subsec:FM_general}, we formulate our mild generalization of Bridgeland's technique, and the results are used to justify conclusions of \S\ref{Sec:SD}.

\subsection{Setting the stage}
\label{Subsec:FM_stage}

\paragraph{General properties of Fourier-Mukai transforms.}

Take smooth pro\-per varieties $X$ and $Y$ and an object $\Pc \in \Db (X \times Y)$. Let $\pi_X$ and $\pi_Y$ denote the two projections from $X \times Y$ to $X$ and $Y$, respectively.

\begin{Def}
    Define the \emph{Fourier-Mukai functor}
    $\FMYX^\Pc : \Db Y \to \Db X$
    \emph{with kernel} $\Pc$ by the following formula:
    $$  \FMYX^\Pc (-) \deq \pi_{X*} \left(
        \Pc \otimes \pi_Y^* (-) \right)
    \mbox{.} $$
\end{Def}

\begin{Def}
    Let $\Phi : \Db Y \to \Db X$ be an exact functor.
    We say that a sheaf $E \in \Coh Y$ is \emph{$\Phi$-WIT$_i$}
    if $\Phi(E)[i]$ is a sheaf (instead of a complex).
\end{Def}

\begin{Fact}
    There exists a group homomorphism
    $\ch (\Pc) = \ch (\FMYX^\Pc) : \Chow Y \to \Chow X$
    that makes the following square commute:
    \begin{diagram}
    \Db Y       & \rTo^{\FMYX^\Pc}  & \Db X     \\
    \dTo^{\ch}  &                   & \dTo_{\ch}\\
    \Chow Y     & \rTo^{\ch(\Pc)}   & \Chow X   
    \end{diagram}
    It is given by the formula:
    $$  \ch (\Pc) (y) = \pi_{X*} \big( \ch(\Pc) . \pi_Y^* (\td_Y.y) \big)
    \mbox{.} $$
\end{Fact}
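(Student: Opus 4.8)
The plan is to deduce the statement directly from the Grothendieck--Riemann--Roch theorem applied to the proper morphism $\pi_X : X \times Y \to X$, together with multiplicativity of the Chern character. First I would observe that the map defined by the displayed formula,
$$ \Chow Y \longrightarrow \Chow X, \qquad y \longmapsto \pi_{X*}\big(\ch(\Pc) . \pi_Y^*(\td_Y . y)\big), $$
is visibly a group homomorphism, being a composite of the $\Z$-linear maps $y \mapsto \td_Y . y$ (multiplication by a fixed class), $\pi_Y^*$ (a ring homomorphism, hence additive), multiplication by the fixed class $\ch(\Pc)$, and the proper pushforward $\pi_{X*}$ (additive on Chow groups). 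In particular the formula makes sense on all of $\Chow Y$, independently of any surjectivity of $\ch$ from the derived category.

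Next I would verify commutativity of the square on an arbitrary object $E \in \Db Y$. Starting from $\FMYX^\Pc(E) = \pi_{X*}\big(\Pc \otimes \pi_Y^* E\big)$, apply Grothendieck--Riemann--Roch to $\pi_X$:
$$ \ch\big(\pi_{X*}(\Pc \otimes \pi_Y^* E)\big) = \pi_{X*}\Big( \ch(\Pc \otimes \pi_Y^* E) . \td(T_{\pi_X}) \Big), $$
where $T_{\pi_X}$ is the relative tangent complex of $\pi_X$. Since $X \times Y \to X$ is the base change of $Y \to \Spec k$, its relative tangent bundle is $\pi_Y^* T_Y$, so $\td(T_{\pi_X}) = \pi_Y^* \td_Y$. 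By multiplicativity of the Chern character and its compatibility with pullback, $\ch(\Pc \otimes \pi_Y^* E) = \ch(\Pc) . \pi_Y^* \ch(E)$. Substituting and regrouping the pullback terms,
$$ \ch\big(\FMYX^\Pc(E)\big) = \pi_{X*}\big( \ch(\Pc) . \pi_Y^* \ch(E) . \pi_Y^* \td_Y \big) = \pi_{X*}\big( \ch(\Pc) . \pi_Y^*(\td_Y . \ch(E)) \big), $$
which is precisely the displayed formula evaluated at $y = \ch(E)$. This proves the square commutes; and since $\ch : \Db Y \to \Chow Y$ factors through $\K Y$, so does the composite, giving the claimed factorization $\ch : \K Y \to \Chow Y \to \Chow X$.

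The only point requiring any care --- and it is not a genuine obstacle --- is the identification $\td(T_{\pi_X}) = \pi_Y^* \td_Y$ of the relative Todd class of a projection, i.e.\ that the relative tangent bundle of $X \times Y \to X$ is the pullback of $T_Y$; one should also be mildly attentive that GRR is invoked in its form for proper (here even smooth projective) morphisms, which is standard in this setting. Everything else is formal manipulation with the projection formula and the ring structure of $\Chow$.
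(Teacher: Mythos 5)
Your argument is correct and is exactly the route the paper takes: the paper's proof is a one-line sketch invoking Grothendieck--Riemann--Roch and properness of the varieties (so that $\pi_{X*} = \pi_{X!}$), and your write-up simply fills in the standard details (relative Todd class of the projection equals $\pi_Y^*\td_Y$, multiplicativity of $\ch$, projection formula). No discrepancy to report.
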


\begin{proof}
    The proof uses Grothendieck-Riemann-Roch theorem and properness of the varieties, so that $\pi_{X*} = \pi_{X!}$. 
\end{proof}

\begin{Def}
    A $Y$-flat sheaf $\Pc$ on $X \times Y$ is said to be \emph{strongly simple} over $Y$ if $\Pc_y \deq \Pc_{|X \times y}$ is simple for all $y \in Y$ and for any pair of distinct points $y, y' \in Y$ one has $\RHom{X}{\Pc_y , \Pc_{y'}} = 0$.
\end{Def}

\begin{Thrm}[Mukai; Bondal-Orlov]
    Let $\Pc$ be a $Y$-flat sheaf on $X \times Y$. Then $\FMYX^\Pc$ is fully faithful iff $\Pc$ is strongly simple over $Y$. If $\Pc$ is flat over both factors, then $\FMYX^\Pc$ is an equivalence iff $\Pc$ is strongly simple over both factors.
\end{Thrm}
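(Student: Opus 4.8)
The plan is to reduce the statement to the Bondal-Orlov point (spanning-class) criterion. First I would record that $\FMYX^\Pc$ admits both a left and a right adjoint, each again a Fourier-Mukai functor: since the two projections are smooth and proper, Grothendieck duality along $\pi_X$ identifies the right adjoint with $\FMXY^{\Pc^\vee \otimes \pi_Y^*\omega_Y[\dim Y]}$, and along $\pi_Y$ the left adjoint with $\FMXY^{\Pc^\vee \otimes \pi_X^*\omega_X[\dim X]}$. With adjoints available, the Bondal-Orlov criterion applies: writing $\Phi = \FMYX^\Pc$ and $k(y)$ for the skyscraper at a closed point $y \in Y$, the functor $\Phi$ is fully faithful exactly when, for all closed points $y, y' \in Y$, the group $\Hom{\Db X}{\Phi k(y), \Phi k(y')[i]}$ vanishes for every $i$ if $y \neq y'$, vanishes for $i < 0$ and for $i > \dim Y$ if $y = y'$, and is one-dimensional if $y = y'$ and $i = 0$.

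The next step is to compute $\Phi k(y)$. Because $\Pc$ is flat over $Y$, the derived restriction $\Pc \otimes \pi_Y^* k(y)$ has no higher Tor and is the honest fibre $\Pc_y$; since $\pi_X$ restricts to an isomorphism $X \times \{y\} \cong X$, we get $\Phi k(y) = \Pc_y$, and therefore $\Hom{\Db X}{\Phi k(y), \Phi k(y')[i]} = \Ext^i_X(\Pc_y, \Pc_{y'})$. The vanishing for $i < 0$ is automatic as the $\Pc_y$ are sheaves, and the vanishing for $i > \dim Y$ I would obtain from the standing hypothesis $\dim X = \dim Y$. What remains of the criterion --- that $\Hom{X}{\Pc_y, \Pc_y}$ is one-dimensional for all $y$ and that $\RHom{X}{\Pc_y, \Pc_{y'}} = 0$ for $y \neq y'$ --- is, word for word, the assertion that $\Pc$ is strongly simple over $Y$. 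This proves the first equivalence in both directions.

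For the statement about equivalences, assume also that $\Pc$ is flat over $X$ and strongly simple over $X$. By the first part $\Phi$ is fully faithful, hence its essential image is right-admissible (it carries the right adjoint $\Phi^R$), and we get a semiorthogonal decomposition $\Db X = \langle (\mathrm{Im}\,\Phi)^{\perp},\, \mathrm{Im}\,\Phi\rangle$; it suffices to prove $(\mathrm{Im}\,\Phi)^{\perp} = 0$. If $G$ lies in this orthogonal then $\Hom{\Db Y}{k(y), \Phi^R G[i]} = \Hom{\Db X}{\Phi k(y), G[i]} = 0$ for all $y$ and $i$, and since the skyscrapers span $\Db Y$ this forces $\Phi^R G = 0$. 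So it is enough to see that $\Phi^R$ reflects zero objects, and here I would invoke symmetry: twisting by a line bundle pulled back from $Y$ and shifting preserve strong simplicity over $X$, so the kernel $\Pc^\vee \otimes \pi_Y^*\omega_Y[\dim Y]$ of $\Phi^R$ is --- at least when $\Pc$ is locally free, the only case arising in the applications --- again a shifted sheaf, flat and strongly simple over $X$; by the first part $\Phi^R$ is then fully faithful, hence reflects zero objects, and $\Phi$ is an equivalence.

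The step I expect to be the genuine obstacle is this last one: for an arbitrary flat sheaf $\Pc$ the dual $\Pc^\vee$ is only a complex, so the fully-faithfulness criterion of the first part does not apply verbatim to $\Phi^R$. The honest remedies are either to restrict to perfect kernels with sheaf duals --- which is all that is needed in this paper --- or to argue essential surjectivity of $\FMXY^\Pc$ directly via a support argument, using that flatness of a nonzero $\Pc$ over $X$ forces $\Pc|_{\{x\}\times Y} \neq 0$ for every $x$, so that the essential image meets every skyscraper on $X$. The secondary point to watch is the $\Ext$-range in the Bondal-Orlov criterion, which agrees with the definition of strong simplicity only once $\dim X = \dim Y$ is imposed; I would make that equality an explicit hypothesis.
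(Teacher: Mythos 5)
Your first half is fine: the paper cites this result to Mukai and Bondal--Orlov without proof, and your derivation of the fully-faithfulness statement from the Bondal--Orlov pointwise criterion (compute $\FMYX^{\Pc}k(y)=\Pc_y$ via $Y$-flatness and translate the $\Ext$ conditions) is exactly the intended route; you are also right that the translation needs $\dim X=\dim Y$, which the paper's loose statement omits but which holds in all of its applications.

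The genuine gap is in the equivalence half. Your reduction to ``$\Phi^R$ reflects zero objects'' is correct, but the step that must use strong simplicity over $X$ is precisely the one you do not prove. Applying part one to $\Phi^R=\FMXY^{\Pc^\vee\otimes\pi_Y^*\omega_Y[\dim Y]}$ requires its kernel to be a shifted sheaf, flat and strongly simple over $X$, and your escape hatch ``$\Pc$ locally free is the only case arising in the applications'' is false for this paper: Bridgeland's kernels here parametrize fiber sheaves ($\ch\Pc_y=(0,af,c)$), so $\Pc$ is supported on a translate of the divisor $X\times_C Y\subset X\times Y$ and is nowhere locally free; the fact that $\Pc^\vee[1]$ is again a sheaf in that situation is itself a nontrivial result of Bridgeland (the sheaf $\Qc$ of Theorem \ref{Thrm:Br_birat}), and even granting it you would still have to establish its $X$-flatness and strong simplicity over $X$. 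The fallback ``support argument'' is not an argument: flatness over $X$ together with $\Pc_x\neq 0$ for all $x$ never uses strong simplicity over $X$ and cannot suffice --- $\Pc=\Oc_{X\times Y}$ satisfies both and $\FMYX^{\Oc_{X\times Y}}$ is nowhere near an equivalence. What the classical proofs actually do at this point is different in kind: after obtaining $\RHom{X}{\Pc_y,G}=0$ for all $y$ from $\Phi^R G=0$, they exploit the Serre-functor relation $\Phi^L\cong S_Y\circ\Phi^R\circ S_X^{-1}$ (equivalently, compare the two adjoint kernels, which differ by $\pi_X^*\omega_X$ versus $\pi_Y^*\omega_Y$) together with indecomposability of $\Db X$, or Bridgeland's criterion that a fully faithful Fourier--Mukai functor with $\ker\Phi^R\subseteq\ker\Phi^L$ is an equivalence; this is the content missing from your sketch. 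Finally, the ``only if'' direction of the second sentence (equivalence $\Rightarrow$ strongly simple over $X$) is never addressed; it is the easy direction --- apply the quasi-inverse $\Phi^R$ to skyscrapers $k(x)$, using $X$-flatness to identify $\Phi^R k(x)$ with $\Pc_x^\vee\otimes\omega_Y[\dim Y]$ and $\RHom{Y}{\Pc_x^\vee,\Pc_{x'}^\vee}\cong\RHom{Y}{\Pc_{x'},\Pc_x}$ --- but it still has to be said.
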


\paragraph{Fourier-Mukai transforms for elliptic surfaces.}
Let $\pi : X \to C$ be an elliptic surface. Given a sheaf $E$ on $X$, we write its Chern class as a triple $(\rk E , \cn_1 E , \chn_2 E) \in \Kt X$, and for the rest of this section, the vector will always be picked in topological K-theory.
All fibers of $\pi$ are (possibly singular) elliptic surfaces that are algebraically equivalent to each other.
We denote the fiber class of $\pi$ by $f$.

\begin{Def}
    For a complex $E \in \Db X$, let $\fdeg E \deq \deg \, (\cn_1 E.f)$ denote the \emph{fiber degree} of $E$. Let $\lambda \deq \lambda_X$ denote the smallest positive fiber degree of a sheaf on $X$, or equivalently, the smallest possible degree of a multisection. We say that a sheaf $E \in \Coh X$ is a \emph{fiber sheaf} if $\rk E = \fdeg E = 0$, or equivalently, if $E$ is supported on finitely many fibers of $\pi$. 
\end{Def}

\begin{Thrm}[Bridgeland \cite{Br}, Proposition 4.2, Section 5]
\label{Thrm:Br_birat}
    Let $\pi : X \to C$ be an elliptic surface. Take two coprime integers $a>0$ and $b$. Then there exists a fine moduli scheme $Y \deq J_X(a,b)$ parametrizing stable fiber sheaves on $X$ with an open subset of points representing a rank $a$, degree $b$ vector bundle on a nonsingular fiber of $\pi$. Moreover, the scheme $Y$ enjoys the following properties:
    \begin{enumerate}[label=(\roman*)]
        \item it is endowed with a natural morphism $\pi' : Y \to C$ which makes it into a smooth elliptic surface;
        \item there exists a sheaf $\Pc$ on $X \times Y$,
        flat and strongly simple over both factors,
        such that for every point $y\in Y$, the sheaf $\Pc_y$
        corresponding to $y$ 
        has Chern character
        $(0, af, c)$;
        \item any such sheaf $\Pc$
        defines an equivalence $\Phi \deq \FMYX^\Pc : \Db Y \to \Db X$;
        \item denoting by $\Qc$ the object $(\Pc^\vee \otimes \pi^* \omega_X)[1]$, we have that $\Qc$ is a sheaf on $X \times Y$ that defines an inverse up to a shift equivalence $\Psi \deq \FMXY^\Qc$, that is $\Phi \Psi = [-1]$ and $\Psi \Phi = [-1]$;
        \item there exists an integer $c$ such that $X\cong J_Y(a,c)$;
        \item the functor $\Phi$ satisfies the following matrix equality:
        $$  \begin{pmatrix} \rk (\Phi E) \\ \fdeg (\Phi E)
        \end{pmatrix} =
        \begin{pmatrix} c&a \\ d&b
        \end{pmatrix}
        \begin{pmatrix} \rk E \\ \fdeg E
        \end{pmatrix}
        \mbox{,} $$
        for all objects $E \in \Db Y$ and some $d \in \lambda \Z$ depending on $\Pc$ that makes the matrix belong to $\SLZ$.
    \end{enumerate}
\end{Thrm}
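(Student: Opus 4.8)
The plan is to reconstruct Bridgeland's argument, whose backbone is the relative moduli theory of semistable sheaves over the base curve $C$ together with Atiyah's classification of sheaves on a smooth elliptic curve. First I would build $Y$ as a relative moduli space over $C$: fix on the fibers of $\pi$ the Hilbert polynomial of a pure one-dimensional sheaf of rank $a$ and degree $b$, and use relative moduli theory over $C$ to produce a projective coarse moduli scheme $Y = J_X(a,b)$ with a map $\pi' : Y \to C$ whose fiber over $c$ is the moduli of semistable such sheaves on $X_c$. Because $\gcd(a,b) = 1$, semistability coincides with stability on every fiber of $\pi$, so there are no strictly semistable points, the moduli problem is fine (a universal sheaf $\Pc$ exists on $X \times_C Y$, flat over $Y$), and the space is smooth: on a smooth fiber $X_c$, Atiyah's theorem identifies $Y_c$ with $X_c$ and shows the governing $\Ext^1$ on the curve is one-dimensional with vanishing obstruction, while for the Kodaira-type singular fibers the same conclusion is Bridgeland's explicit analysis of stable sheaves on nodal, cuspidal, and cycle-type curves. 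Assembling these local pictures gives that $Y$ is a smooth surface and $\pi'$ an elliptic fibration (giving (i)); pushing $\Pc$ forward along the closed immersion $X \times_C Y \hookrightarrow X \times Y$ produces the sheaf of (ii), with $\ch(\Pc_y) = (0, af, c)$ for the resulting integer $c$, and one checks flatness over $X$ as well.

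For (iii) I would verify that $\Pc$ is strongly simple over both factors and apply the Mukai-Bondal-Orlov criterion quoted above. Simplicity of each $\Pc_y$ is stability. The vanishing $\RHom{X}{\Pc_y, \Pc_{y'}} = 0$ for $y \neq y'$ is clear when $\Pc_y$ and $\Pc_{y'}$ are supported over different points of $C$, and when they lie over the same fiber $X_c$ it amounts to the statement that two non-isomorphic stable sheaves of the same slope on the (possibly singular) curve $X_c$ carry no nonzero maps and, by Serre duality on $X_c$, no nonzero $\Ext^1$ -- again Atiyah plus the singular-fiber bookkeeping. By the symmetry of $X$ and $Y$ this is strong simplicity over both factors, hence $\Phi = \FMYX^\Pc$ is an equivalence, which is (iii). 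For (iv), the quasi-inverse of a Fourier-Mukai equivalence between smooth projective surfaces has kernel $\Pc^\vee \otimes \pi_X^*\omega_X[2]$, where $\pi_X : X \times Y \to X$; since $\Pc$ is a sheaf supported in codimension one in $X \times Y$, its derived dual is concentrated in cohomological degree $1$, so $\Qc = (\Pc^\vee \otimes \pi_X^*\omega_X)[1]$ is an honest sheaf and $\Qc[1]$ is the quasi-inverse kernel. Consequently, using the un-shifted $\Qc$, the functor $\Psi = \FMXY^\Qc$ satisfies $\Phi\Psi = \Psi\Phi = [-1]$.

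Then (vi) is a computation: applying the cohomological Fourier-Mukai transform $\ch(\Pc)$ of the preceding Fact to the known Chern characters (and likewise $\ch(\Qc)$ for $\Psi$) produces an explicit integer matrix sending $(\rk E, \fdeg E)$ to $(\rk(\Phi E), \fdeg(\Phi E))$, with top row $(c, a)$ read off from $\ch(\Pc_y) = (0, af, c)$; the lower-left entry $d$ is a multiple of $\lambda$ because the fiber degree of any object on $X$ lies in $\lambda\Z = \lambda_X\Z$. The matrix belongs to $\SLZ$ because the exact equivalence $\Phi$ preserves the Euler pairing, and the restriction of that pairing to the rank/fiber-degree plane of an elliptic surface is, up to the normalization by $\lambda$, the standard alternating nondegenerate form -- so the matrix is symplectic in dimension two and has determinant $1$. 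Finally (v) follows by running the whole construction on $\pi' : Y \to C$ with $c$ in place of $b$: the sheaf $\Qc$ then exhibits $X$ as the fine moduli space $J_Y(a,c)$ of stable fiber sheaves, by the same fiberwise identification.

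The step I expect to be the real obstacle is everything occurring over the discriminant of $\pi$: showing that the relative moduli space stays smooth there, that its singular fibers are again of Kodaira type and isomorphic to the corresponding fibers of $X$, and that the $\mathrm{Hom}$- and $\Ext^1$-vanishing underpinning strong simplicity persists for stable sheaves on nodal, cuspidal, and cycle-type curves. Over the smooth fibers this is all classical Atiyah theory; the genuine content of Proposition 4.2 and Section 5 of \cite{Br} is the control across the singular fibers.
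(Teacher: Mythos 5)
A preliminary but important point: the paper does not prove this statement at all. Theorem \ref{Thrm:Br_birat} is imported verbatim from Bridgeland (Proposition 4.2 and Section 5 of \cite{Br}), and \S\ref{Subsec:FM_stage} is explicitly declared to contain no original results. So your proposal can only be compared with Bridgeland's own argument, whose skeleton you do reproduce: relative (Simpson) moduli over $C$, coprimality eliminating strictly semistable points and yielding a universal sheaf, an equivalence criterion applied to a strongly simple kernel, the dual kernel $(\Pc^\vee\otimes\pi_X^*\omega_X)[1]$ giving the quasi-inverse up to shift (your degree count for $\Pc^\vee$ and the resulting $\Phi\Psi=\Psi\Phi=[-1]$ are right), Grothendieck--Riemann--Roch for the matrix, and symmetry of the construction for (v).

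The genuine gap is exactly where you flag it, and it also misrepresents how \cite{Br} works. You defer everything over the discriminant -- smoothness of $Y$, fineness, and the vanishing of Hom's and Ext's underlying strong simplicity -- to an ``explicit analysis of stable sheaves on nodal, cuspidal, and cycle-type curves'' that you never carry out; as written, (i), (ii) and hence (iii) are not established over singular fibers. But the point of Bridgeland's method is precisely that no such classification is needed: for distinct $y,y'$ on the same fiber one has $\Hom{X}{\Pc_y,\Pc_{y'}}=0$ by stability, $\Ext^2_X(\Pc_y,\Pc_{y'})=0$ by Serre duality on the surface (the canonical bundle of the elliptic surface restricts trivially along fibers, with the usual care at multiple fibers), and then $\Ext^1_X(\Pc_y,\Pc_{y'})=0$ drops out of Riemann--Roch, since $\chi(\Pc_y^\vee\otimes\Pc_{y'})=0$ because both Chern characters are proportional to $(0,af,\ast)$ and $f^2=0$; likewise smoothness of $J_X(a,b)$ comes from deformation theory of stable sheaves on the surface, not from Kodaira fiber types. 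Two further soft spots: strong simplicity and flatness over the $X$-factor are not ``by symmetry'' -- $\Pc$ is a universal family over $Y$, and the behaviour of the sheaves $\Pc_x$ on $Y$ needs its own argument (this asymmetry is the content of the Remark immediately following the theorem in the paper). Finally, your justification that the matrix lies in $\SLZ$ via ``the Euler pairing restricts to a standard alternating form on the rank/fiber-degree plane'' fails in general: the antisymmetrized Euler form on a surface is governed by $K_X$ and vanishes identically when $X$ is K3 or abelian. Invertibility of the induced map on the $(\rk,\fdeg)$-lattice only gives determinant $\pm 1$; the sign must be fixed separately, for instance by an explicit computation of the transform over a smooth fiber.
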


\begin{Rem}
    One can choose $\Pc$ so that $\Pc_x$ is stable for all $x \in X$, but it is important to note that it is in general impossible to choose a tautological sheaf $\Pc$ on $X \times Y$ that would parametrize stable sheaves on both $X$ and $Y$ simultaneously.
\end{Rem}

\paragraph{Application to moduli of stable sheaves.}

Pick a Chern class $v = (r,\Lambda,k)$ in $\Kt X$ such that $r > 1$ and is coprime to the fiber degree $d \deq \Lambda.f$ of $\Lambda$. In this case, Friedman observed that there exist polarizations of $X$ with respect to which a torsion-free sheaf $E$ is stable iff its restriction to all but finitely many fibers is stable, and in this case the notions of Gieseker stability, slope stability and semistability and fiberwise stability all coincide. These polarizations are called \emph{suitable}. For a detailed explanation of these, see Chapter 6 in Friedman's book \cite{Fr}.

For such polarizations, consider the fine moduli space $\Mc \deq \Mc_v(X)$ of stable torsion-free sheaves on $X$.

Let $a$, $b$ be the unique pair of integers satisfying $br-ad = 1$ and $0<a<r$. And let $\pi' : Y \to C$ be the elliptic surface $J_X(a,b)$.

Recall that $Y^{[t]}$ is a standard notation for the Hilbert scheme of $t$ points and consider the moduli space $\Nc \deq \Mc_{(1,0,t)}(Y) = \Picnot Y \times Y^{[t]}$ such that $\dim \Mc = \dim \Nc$.

Recall also that if we have a Fourier-Mukai functor $\Phi : \Db X \to \Db Y$ such that all sheaves in some moduli space $\Mc$ over $X$ are $\Phi$-WIT$_0$, that is are sent to sheaves, then we can say that $\Phi$ induces a map $\Mc \to \Phi(\Mc)$ of sets of sheaves.

\begin{Thrm}[Bridgeland \cite{Br}, Theorem 1.1, Section 7.1]
\label{Br_smooth}
    The space $\Mc = \Mc_v(X)$ is smooth, and in fact the morphism $\det : \Mc \to \Mc_{(1,\Lambda,\frac{\Lambda^2}{2})}(X)$ is smooth.
    The spaces $\Mc$ and $\Nc = \Mc_{(1,0,t)}(Y)$ are birational, in particular, $\Mc$ is irreducible. Moreover, the birational equivalence is established by a Fourier-Mukai functor with kernel $\Pc \in \Coh (X \times Y)$ that is a tautological sheaf parametrizing stable fiber sheaves on $Y$.
\end{Thrm}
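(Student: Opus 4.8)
The plan is to deduce everything from Bridgeland's Fourier--Mukai equivalence in Theorem~\ref{Thrm:Br_birat}, handling smoothness and birationality by two independent arguments.

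\emph{Setup.} First I would apply Theorem~\ref{Thrm:Br_birat} to the coprime pair $(a,b)$ with $br-ad=1$ and $0<a<r$, obtaining $Y=J_X(a,b)$, the equivalence $\Phi=\FMYX^\Pc:\Db Y\to\Db X$ with quasi-inverse $\Psi=\FMXY^\Qc$ satisfying $\Phi\Psi=\Psi\Phi=[-1]$, and the matrix identity of Theorem~\ref{Thrm:Br_birat}(vi). Inverting that matrix and using $br-ad=1$ shows that $\ch(\Psi)$ sends $v=(r,\Lambda,k)$ to a class of rank one; after twisting the kernel by a line bundle pulled back from $Y$ I normalize this class to $(1,0,t)$, the integer $t$ being forced by the equality $\dim\Mc=\dim\Nc$ recorded in the statement (matched to $\ch_2(\hat E)$ via invariance of the Euler pairing under $\Psi$ and equality of Hodge numbers for derived-equivalent surfaces). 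For the last clause of the theorem I note that the kernel $\Qc=(\Pc^\vee\otimes\pi^*\omega_X)[1]\in\Coh(X\times Y)$ of Theorem~\ref{Thrm:Br_birat}(iv), if $\Pc$ is chosen with all $\Pc_x$ stable (possible by the Remark together with the symmetry $X\cong J_Y(a,c)$ of part (v)), has each $\Qc_x$ a stable fiber sheaf on $Y$; this is the kernel that will carry the birational map.

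\emph{Smoothness.} I claim $\Ext^2_X(E,E)_0=0$ for every $E\in\Mc$, which forces $\det:\Mc\to\Mc_{(1,\Lambda,\frac{\Lambda^2}{2})}(X)$ to be smooth; since the target is a torsor under $\Picnot X$, hence smooth, $\Mc$ itself is then smooth. By Serre duality on the surface $X$, $\Ext^2_X(E,E)_0$ is dual to the kernel of $\mathrm{tr}:\Hom{X}{E,E\otimes\omega_X}\to H^0(X,\omega_X)$, so it is enough to see that every trace-free $\phi:E\to E\otimes\omega_X$ vanishes. Since the polarization is suitable and $\gcd(r,d)=1$, the restriction of $E$ to a general fiber $X_c$ is stable, hence simple, and $\omega_X|_{X_c}$ is trivial by the canonical bundle formula, so $\phi|_{X_c}$ is a scalar $\lambda(c)\cdot\mathrm{id}$; taking traces, $r\lambda(c)$ is the restriction of $\mathrm{tr}(\phi)=0$, so $\lambda(c)=0$ for general $c$, and the image of $\phi$ is supported on finitely many fibers. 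But that image is a subsheaf of the torsion-free sheaf $E\otimes\omega_X$ on a smooth surface, hence torsion-free, while being supported in dimension $\le 1$ it is also torsion; therefore it is zero and $\phi=0$.

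\emph{Birationality.} I would produce a dense open $U\subseteq\Mc$ on which $E\mapsto\hat E\deq\Psi(E)[\epsilon]$ (with $\epsilon$ the shift making the result a sheaf of positive rank) takes values in $\Nc$, i.e.\ $\hat E$ is a rank-one torsion-free sheaf with $\ch\hat E=(1,0,t)$. The mechanism is that a stable $E$ restricts to a stable --- hence semistable, coprime rank and degree --- bundle on all but finitely many fibers, and on a smooth fiber $X_c$ the kernel restricts to (a twist of) the classical elliptic-curve Fourier--Mukai kernel, under which such a bundle goes to a rank-one sheaf concentrated in one cohomological degree; a semicontinuity argument over $C$ shows $\Psi(E)$ is concentrated in one degree globally, so $\hat E$ is a coherent rank-one sheaf, and over a dense open it is torsion-free (a torsion subsheaf would be a fiber sheaf whose $\Phi$-transform would be a torsion subsheaf or a destabilizing fiber quotient of the torsion-free stable sheaf $E$). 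Transforming the universal sheaf over $U$ relatively --- it is flat and fibrewise WIT, so by cohomology and base change its relative transform is a flat family of rank-one torsion-free sheaves --- gives a morphism $U\to\Nc$; it is injective because $\Phi\Psi=[-1]$, and by the dimension match together with the smoothness of $\Nc$ it is an open immersion. Running $\Phi$ backwards matches a dense open of $\Nc$ with a dense open of $\Mc$, so $\Mc$ and $\Nc$ are birational via the chosen kernel, and since $\Nc=\Picnot Y\times Y^{[t]}$ is irreducible --- a product of an abelian variety with the Hilbert scheme of a smooth surface (Fogarty) --- so is $\Mc$.

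\emph{Main obstacle.} The hard part will be the relative and fibrewise analysis underlying the birationality step --- proving a \emph{uniform} WIT degree for all stable sheaves, controlling the torsion of their transforms, and transporting the universal family --- because the Fourier--Mukai kernel is transparently well behaved only over the smooth fibers of $\pi$, so the singular and multiple fibers must be treated separately; this is precisely the content of \S 7 of \cite{Br}. A secondary but genuine point is exhibiting the kernel in $\Coh(X\times Y)$ whose restrictions over $X$ are stable fiber sheaves on $Y$, which is a relative Atiyah-type construction rather than the kernel produced directly by Theorem~\ref{Thrm:Br_birat}.
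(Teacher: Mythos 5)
First, note that the paper itself offers no proof of this statement: it is imported verbatim from Bridgeland \cite{Br} (Theorem 1.1 and \S 7.1), so the only fair comparison is with Bridgeland's original argument, which your sketch follows in outline. Your smoothness half is essentially that argument and is sound: Serre duality reduces the vanishing of trace-free $\Ext^2$ to killing trace-free maps $E \to E \otimes \omega_X$, suitability of the polarization makes $E$ simple on a general fiber where $\omega_X$ restricts trivially, and a fiber-supported subsheaf of a torsion-free sheaf must vanish; smoothness of $\det$ and of $\Mc$ then follows from standard deformation theory.

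The genuine gap is in the birationality half, and it sits exactly where you run the transform from $\Mc$ to $\Nc$. You need the locus $U \subset \Mc$ of sheaves $E$ whose transform $\hat E = \Psi E[1]$ is torsion-free to be dense in \emph{every} component of $\Mc$, but your justification fails on two counts. The parenthetical argument (``a torsion subsheaf would be a fiber sheaf whose $\Phi$-transform would be a torsion subsheaf or a destabilizing fiber quotient of $E$'') would, if it worked, show $\hat E$ is torsion-free for \emph{all} stable $E$ --- which is false in general: a fiber-supported torsion subsheaf $T \subset \hat E$ can be $\Phi$-WIT$_1$, in which case applying $\Phi$ to $0 \to T \to \hat E \to \hat E/T \to 0$ produces no subsheaf of $E$ at all and no contradiction; the failure of such $T$ to vanish is precisely why the correspondence is only birational. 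Second, your route is circular: you use density of $U$ to get birationality with $\Nc$, and then deduce irreducibility of $\Mc$ from that birationality --- but without irreducibility (or a separate argument) nothing rules out a component of $\Mc$ on which every transform has torsion, and then neither birationality nor irreducibility follows. Bridgeland closes this gap with an additional step (a dimension estimate showing the sheaves with non-torsion-free transforms cannot fill out a component of the smooth, pure-dimensional $\Mc$), and it is telling that the present paper's own generalization, Theorem \ref{Thrm:gen_birat}, avoids the issue by arguing from the $\Nc$ side and \emph{quoting} irreducibility of $\Mc$ from this very theorem. Since you also defer the uniform WIT statement, the behaviour over singular and multiple fibers, and the universal-family bookkeeping to \S 7 of \cite{Br}, what remains is an accurate roadmap of Bridgeland's proof rather than a proof: the irreducibility/density step in particular is not optional and is not supplied.
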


\subsection{The generalized statement}
\label{Subsec:FM_general}

Let $\pi : X \to C$ be an elliptic surface and fix a (quasi)polarization $H$ of $X$ such that stability with respect to $H$ is equivalent to stability on fibers.

Fix a triple $v = (r, \Lambda, k)$ in $\Kt X \cong \Z \times \NS X \times \Z$ or in $\Kor X \cong \Z \times \Chown^1 X \times \Z$ in such a way that semistable sheaves of class $v$ are stable. Let $d \deq \Lambda.f$ denote the fiber degree of $\Lambda$, then coprimality of $r$ and $d$ ensures the above requirement.

Take the fine moduli space $\Mc = \Mc_v(X)$ of stable torsion-free sheaves on $X$ of class $v$. We want to state results for either of the K-theories, so we will not make a choice here explicitly. Note that by Bridgeland's Theorem \ref{Br_smooth},
the scheme $\Mc_v$ is smooth for both topological vector $v\vt$ and oriented topological vector $v\vor$.

Fix two matrices $\varphi$ and $\psi$ from $\SLZ$:
$$  \varphi = \begin{pmatrix} c&a \\ e&b \end{pmatrix}
\mbox{, }
    \psi = \begin{pmatrix} -b&a \\ e&-c \end{pmatrix}
\mbox{,}$$
where $e$ is a multiple of $\lambda$ and $a > 0$. They are inverse to each other up to a sign. Set $Y = J_X(a,b)$.

By Theorem \ref{Thrm:Br_birat} (also by \cite{Br}), we can choose sheaves $\Pc$ and $\Qc$ on the product $X \times Y$ which define Fourier-Mukai transforms $\Phi: \Db Y \to \Db X$ and $\Psi: \Db X \to \Db Y$ with matrices $\varphi$ and $\psi$, respectively. We assume that $\Pc$ parametrizes stable torsion-free sheaves on $Y$. So the fibers $\Qc_x$ are also stable fiber sheaves on $Y$, and $\ch \Qc = (0, af, -c)$.


Let $w = \Psi v [1] = - (\ch \Psi) (\ch v)$ and $\Nc = \Mc_w(Y)$.

\begin{Rem}
\label{Rem:numerology}
    In the following sequence of lemmas, we will often have to use the numerical assumption $\frac{b}{a} > \frac{d}{r}$. Geometrically, this means that we want the slope of a stable sheaf $\Pc_y$ on a fiber of $X \to C$ to be greater than the slope of $v$ with respect to $f$; or that the rank of $w$ is positive: $\rk w = br - ad > 0$.
\end{Rem}

\begin{Lem}
    If $\frac{b}{a} > \frac{d}{r}$, then every stable sheaf on $X$ from the moduli space $\Mc = \Mc_{(r, \Lambda, k)}(X)$ is $\Psi$-WIT$_1$.
\end{Lem}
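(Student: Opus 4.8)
The plan is to compute the cohomology sheaves of the complex $\Psi(E)=\pi_{Y*}\bigl(\Qc\otimes\pi_X^*E\bigr)$ and to show that they all vanish except the one in degree $1$. Since $\Qc$ is flat over $X$ — its fibres $\Qc_x$ are fibre sheaves of one fixed numerical type — the derived tensor product $\Qc\otimes\pi_X^*E$ is an honest sheaf on $X\times Y$, and because the fibres of $\pi_Y$ have dimension $2$ the complex $\Psi(E)$ then has cohomology sheaves $\Hrm^i(\Psi(E))$ only for $i\in\{0,1,2\}$. So it suffices to prove the two vanishings $\Hrm^2(\Psi(E))=0$ and $\Hrm^0(\Psi(E))=0$; the first is formal, the second is the actual content and is where the hypothesis $\frac{b}{a}>\frac{d}{r}$ and the stability of $E$ enter.

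For $\Hrm^2(\Psi(E))=0$ I would argue by support. As $\Qc=\Pc^\vee\otimes\pi_X^*\omega_X[1]$ and the fibre sheaf $\Pc_y$ is supported on the fibre of $\pi$ over $\pi'(y)$, the support of $\Qc$, hence of $\Qc\otimes\pi_X^*E$, is contained in $X\fprod{C}Y$; so for every $y\in Y$ the sheaf $\bigl(\Qc\otimes\pi_X^*E\bigr)|_{X\times\{y\}}$ is supported on a curve and has vanishing $\Hrm^2$. Since $R^2\pi_{Y*}$ is the top derived pushforward, this forces $R^2\pi_{Y*}\bigl(\Qc\otimes\pi_X^*E\bigr)=0$. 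This step needs neither stability nor the numerical assumption.

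The crux is $\Hrm^0(\Psi(E))=0$, and I expect this to be the main obstacle. Fix a polarization $H'$ on $Y$ with positive fibre degree $\delta:=\fdeg(\Oc_Y(H'))$. If $\Hrm^0(\Psi(E))\neq0$, then $\Hom{Y}{\Oc_Y(-nH'),\Hrm^0(\Psi(E))}\neq0$ for $n\gg0$ (a non-zero coherent sheaf has sections after a sufficiently positive twist), and since $\Psi(E)$ is concentrated in non-negative degrees the truncation triangle identifies this with $\Hom{Y}{\Oc_Y(-nH'),\Psi(E)}$ (there are no maps in $\Db Y$ from a sheaf into a complex concentrated in strictly positive degrees). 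Since $\Phi\Psi=[-1]$ (Theorem~\ref{Thrm:Br_birat}(iv)) we have $\Psi=\Phi^{-1}\circ[-1]$, so applying the equivalence $\Phi$ turns this group into $\Hom{X}{\Phi(\Oc_Y(-nH')),E[-1]}$. Now $\Oc_Y(-nH')$ restricts to a stable sheaf on every fibre of $\pi'$, hence is $\Phi$-WIT by Bridgeland \cite{Br}; by the matrix $\varphi$ its $\Phi$-transform would have rank $c-na\delta<0$ for $n\gg0$, so it is in fact $\Phi$-WIT$_1$, and $G_n:=\Phi(\Oc_Y(-nH'))[1]$ is a genuine sheaf on $X$ with $\rk G_n=na\delta-c$ and $\fdeg G_n=nb\delta-e$, which is semistable on the general fibre of $\pi$ (again \cite{Br}). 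Hence the fibre slope $\frac{nb\delta-e}{na\delta-c}$ of $G_n$ tends to $\frac{b}{a}>\frac{d}{r}$ and in particular exceeds $\frac{d}{r}=\fdeg(E)/\rk(E)$ once $n$ is large. Finally $\Hom{X}{\Phi(\Oc_Y(-nH')),E[-1]}=\Hom{X}{G_n,E}$, and this vanishes: restricting a map $g\colon G_n\to E$ to a general fibre $X_t$ gives a map of semistable sheaves on the elliptic curve $X_t$ — here $E|_{X_t}$ is semistable of slope $\frac{d}{r}$ because $H$ is suitable — going from strictly larger to strictly smaller slope, hence $g|_{X_t}=0$; so the image of $g$ is a subsheaf of $E$ supported on finitely many fibres, and since $E$ is torsion-free it must be $0$. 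This contradiction gives $\Hrm^0(\Psi(E))=0$, and with $\Hrm^2(\Psi(E))=0$ we conclude that $\Psi(E)$ is concentrated in degree $1$, i.e. $E$ is $\Psi$-WIT$_1$.

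The only non-formal ingredients are the two facts borrowed from \cite{Br}: that a sheaf on $Y$ whose general fibre restriction is semistable is $\Phi$-WIT (so that, once the rank sign is pinned down by $\varphi$, it is $\Phi$-WIT$_1$), and that $\Phi$ is compatible with restriction to the general fibre of $\pi$, so that the transform $G_n$ really is fibrewise semistable. Everything else — the cohomological amplitude of $\Psi$ on sheaves, the support argument for $\Hrm^2$, the slope estimate coming from $\varphi$, and the $\Hom$-vanishing — is routine, so the difficulty is concentrated in the $\Hrm^0$ step and in these appeals to the structure theory of \cite{Br}.
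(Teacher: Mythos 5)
Most of your route is sound and genuinely different from the paper's: the amplitude claim for $\Psi(E)$, the support argument killing $\Hrm^2$, the reduction of $\Hrm^0(\Psi(E))\neq 0$ to $\mathrm{Hom}_Y(\Oc_Y(-nH'),\Psi(E))\neq 0$, the transfer of that group across the equivalence, and the final slope/Hom-vanishing on general fibres of $\pi$ are all fine (the last step needs the small standard argument that a map vanishing on general fibres has vertical image, but that is routine, as you say). The genuine gap is the crux input that $\Oc_Y(-nH')$ is $\Phi$-WIT$_1$. The principle you invoke -- ``restricts to a stable sheaf on every fibre, hence is $\Phi$-WIT by \cite{Br}'' -- is not a statement of Bridgeland's in this form, and it fails exactly where it would have to do work: elliptic surfaces in this setting may have non-integral fibres, on which the restriction of a line bundle is not stable and the fibrewise WIT analysis breaks down, while WIT-ness is a condition at \emph{all} fibres. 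The weakened version in your closing paragraph (``general fibre restriction semistable implies $\Phi$-WIT'') is false outright: WIT-ness cannot be detected on general fibres, as the paper's own locus $\Nc^\circ\subsetneq\Nc$ of $\Phi$-WIT$_0$ twisted ideal sheaves already illustrates.

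The claim you need is true, but the honest proof is Bridgeland's Lemma 6.1 applied to $\Oc_Y(-nH')$ (giving $0\to A\to \Oc_Y(-nH')\to B\to 0$ with $A$ $\Phi$-WIT$_0$, $B$ $\Phi$-WIT$_1$) together with the slope bound of Lemma 6.2 for $\Phi$-WIT$_0$ sheaves: a nonzero $A$ would have rank one and fibre slope at most $-n\delta$, which for $n\gg 0$ violates that bound (the threshold being $\mu\Qc_x=-\frac{c}{a}$), so $A=0$. (Alternatively you could avoid the WIT claim altogether: $\mathrm{Hom}_X(\Phi(\Oc_Y(-nH')),E[-1])$ is a quotient of $\mathrm{Hom}_X(\Hrm^1\Phi(\Oc_Y(-nH')),E)$, and top-degree base change identifies the restriction of $\Hrm^1$ to a general smooth fibre with the fibrewise transform, which is stable of slope tending to $\frac{b}{a}$.) But once Lemmas 6.1 and 6.2 are on the table, the entire detour is unnecessary: the paper applies them directly to $E$ -- the $\Psi$-WIT$_0$ part $A\subset E$ would have fibre slope at least $\frac{b}{a}>\frac{d}{r}=\mu_f E$, contradicting stability with respect to the suitable polarization -- which proves the lemma in three lines. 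So your argument is repairable, but its key step currently rests on an unproved (and, as stated, false) principle, and the repair collapses it onto the paper's much shorter proof.
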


\begin{proof}
    Take $E \in \Mc$. By Bridgeland's Lemma 6.1 \cite{Br}, there exists a short exact sequence $0 \to A \to E \to B \to 0$, where $A$ is $\Psi$-WIT$_0$ and $B$ is $\Psi$-WIT$_1$.

    If $E$ is not WIT$_1$, then $A \neq 0$ and by Bridgeland's Lemma 6.2 \cite{Br},
    $\mu A \geq \frac{b}{a}$.
    But then the inequality
    $\frac{b}{a} > \frac{d}{r} = \mu E$
    contradicts stability of $E$.
\end{proof}

\paragraph{Open loci.} Define the following two subsets:
\begin{equation*}
\begin{split}
    \Mc^\circ &= \left\{ E \in \Mc \mid
    \Psi E [1]
    \mbox{ is torsion-free} \right\}
    \mbox{,} \\
    \Nc^\circ &= \left\{ F \in \Nc \mid F \mbox{ is } \Phi \mbox{-WIT}_0 \right\}
    \mbox{.}
\end{split}
\end{equation*}

\begin{Lem}
    For every $F \in \Nc^\circ$, its transform $\hat F = \Phi F$ is torsion-free and stable, so $\hat F \in \Mc^\circ$.
\end{Lem}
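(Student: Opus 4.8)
The plan is to read off the numerical class and WIT type of $\hat F=\Phi F$ from the equivalence, then to prove torsion-freeness by combining the WIT dichotomy with a rank computation, and finally to prove stability by restricting to a general fibre and invoking Friedman's fibrewise criterion together with the behaviour of Fourier--Mukai transforms on elliptic curves. For the formal input: since $F\in\Nc^\circ$ is $\Phi$-WIT$_0$, the object $\hat F$ is a genuine sheaf; and by Theorem~\ref{Thrm:Br_birat}(iv) the functors $\Phi$ and $\Psi$ are mutually inverse up to a shift, so $\Phi$ is an equivalence with quasi-inverse $\Psi[1]$ and $\Psi\hat F=F[-1]$, i.e. $\hat F$ is $\Psi$-WIT$_1$ with $\Psi\hat F[1]=F$. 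Since $w=\Psi v[1]$ and $\Phi\Psi=[-1]$, on K-theory classes one gets $[\hat F]=v$, so $\rk\hat F=r$ and $\fdeg\hat F=d$.

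For torsion-freeness, let $T\subseteq\hat F$ be the torsion subsheaf. Applying $\Psi$ to $0\to T\to\hat F\to\hat F/T\to0$ and using $\Hrm^0(\Psi\hat F)=\Hrm^0(F[-1])=0$ forces $\Hrm^0(\Psi T)=0$, so $T$ is $\Psi$-WIT$_1$; since $\rk T=0$, the action of $\psi$ on $(\rk,\fdeg)$ gives $\rk\Hrm^1(\Psi T)=-\rk\Psi T=-a\,\fdeg T$. As $a>0$, as $\fdeg T\ge0$ ($\cn_1 T$ is effective and $f$ is nef), and as $\Hrm^1(\Psi T)$ is a sheaf, this forces $\fdeg T=0$: so $T$ is a fibre sheaf. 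Now adjunction gives $\Hom{X}{T,\hat F}=\Hom{Y}{\Psi T,F[-1]}$; since the Fourier--Mukai kernels are supported on $X\times_C Y$, the complex $\Psi T$ is supported on finitely many fibres of $Y\to C$, so its cohomology sheaves are torsion on $Y$, and the long exact sequence attached to the truncation triangle of $\Psi T$ --- using that negative $\Ext$'s between sheaves vanish and that a torsion sheaf admits no nonzero map into the torsion-free $F$ --- yields $\Hom{X}{T,\hat F}=0$. Hence $T=0$, and $\hat F$ is torsion-free of class $v$.

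For stability, since $\gcd(r,d)=1$ and $H$ is suitable, by Friedman's criterion it suffices to show $\hat F|_{X_c}$ is a stable bundle for general $c\in C$. For general $c$ the fibres $X_c,Y_c$ are smooth elliptic curves and all sheaves involved are flat over $C$ near $c$, so --- the kernels being supported on $X\times_C Y$ --- flat base change over $C$ identifies $\hat F|_{X_c}$ with $\Phi_c(F|_{Y_c})$, where $\Phi_c\colon\Db(Y_c)\to\Db(X_c)$ is the induced Fourier--Mukai equivalence of elliptic curves. Since $\gcd(\rk w,\fdeg w)=1$ (because $\psi\in\SLZ$), $F|_{Y_c}$ is semistable; a Fourier--Mukai equivalence of an elliptic curve carries semistable objects to semistable objects, acting on slopes through the $\SLZ$-action and permuting the corresponding hearts (classical, after Atiyah and Bridgeland), so $\Phi_c(F|_{Y_c})$ is a shift of a semistable sheaf, and since it equals the genuine sheaf $\hat F|_{X_c}$ it is a semistable (hence, by coprimality of $r$ and $d$, stable) bundle on $X_c$. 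Therefore $\hat F\in\Mc$; and since $\Psi\hat F[1]=F$ is torsion-free, $\hat F\in\Mc^\circ$.

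The step I expect to be the main obstacle is the stability argument: one must correctly match the fibrewise transform with the global one via base change and, above all, control the cohomological shift so that $\Phi_c(F|_{Y_c})$ lands in degree zero rather than being a shifted sheaf --- and that is exactly where the hypothesis $F\in\Nc^\circ$ (that $F$ is $\Phi$-WIT$_0$) does its work. The torsion-freeness step is comparatively routine once the WIT dichotomy and the matrix action are in hand, the only subtlety being the reduction to a fibre sheaf before applying the $\Hom$-vanishing.
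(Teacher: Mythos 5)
Your proof is correct and follows essentially the same route as the paper, which adapts Bridgeland's Lemma 7.2: the same short exact sequence $0 \to T \to \hat F \to \hat F/T \to 0$ transformed by $\Psi$, the observation that $T$ is $\Psi$-WIT$_1$ and hence a fiber sheaf (you re-derive this from the matrix action rather than citing Bridgeland's Lemma 6.3), and the vanishing of maps from the torsion transform into the torsion-free $F$ to force $T=0$ (you conclude via full faithfulness, the paper via the WIT-type mismatch $\Psi^0(\hat F/T)\cong\Psi^1 T$). For stability the paper simply invokes ``Bridgeland's argument,'' while you spell that argument out (restriction to general fibers, suitability of the polarization, and preservation of semistability under Fourier--Mukai equivalences of elliptic curves), which is a legitimate expansion of the same idea.
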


\begin{proof}
    We will adapt the proof of Bridgeland's Lemma 7.2 \cite{Br}.
    
    Observe that $\hat F$ is stable as long as it is torsion-free, by Bridgeland's argument. So we need to prove that $\hat F$ is torsion-free. Assume that $T \subset \hat F$ is a torsion subsheaf, then we get a long exact sequence (\cite{Br}, Equation (6.1.1)) by applying $\Psi$ to the short exact sequence $0 \to T \to \hat F \to \hat F / T \to 0$:
    \begin{equation*}
    \begin{split}
        0 \to \Psi^0 T \to 0 \to 
        \Psi^0 (\hat F / T) \to
        \Psi^1 T \to F \to 
        \Psi^1 (\hat F / T) \to 0
    \mbox{.}
    \end{split}
    \end{equation*}
    It follows that $T$ is a $\Psi$-WIT$_1$ torsion sheaf, so by Bridgeland's Lemma 6.3 \cite{Br} it is a fiber sheaf, and therefore $\Psi^1 T$ also is a fiber sheaf. By assumption, the sheaf $F$ is torsion-free, so from the long exact sequence we get an isomorphism
    $\Psi^0 (\hat F / T) \cong \Psi^1 T$. But the left hand side is a $\Phi$-WIT$_1$ sheaf, while the right hand side is $\Phi$-WIT$_0$, and this can only happen when $\Psi^0 (\hat F / T) \cong \Psi^1 T = 0$, so $T=0$ and $\hat F$ is torsion-free.
\end{proof}

\begin{Lem}
\label{Lem:vb_in_N}
    If $\frac{b}{a} > \frac{d}{r}$, then every vector bundle $F \in \Nc$ is $\Phi$-WIT$_0$, that is lies in the locus $\Nc^\circ$.
\end{Lem}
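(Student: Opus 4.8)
The plan is to use the inverse Fourier--Mukai functor $\Psi$ to detect when $F$ is $\Phi$-WIT$_0$. Recall that $\Phi\Psi = [-1]$ and $\Psi\Phi = [-1]$, so $F$ is $\Phi$-WIT$_0$ precisely when $\Phi F$ is a sheaf; the obstruction is the existence of a nonzero $\Phi^1 F = \Hom{}{}$-style piece, i.e.\ we want to rule out $\Phi F$ having cohomology in two degrees. The key is Bridgeland's WIT dichotomy (the short exact sequence analogue of Lemma 6.1, applied to $\Phi$ rather than $\Psi$): for $F \in \Coh Y$ there is $0 \to A \to F \to B \to 0$ with $A$ being $\Phi$-WIT$_0$ and $B$ being $\Phi$-WIT$_1$, and by the analogue of Bridgeland's Lemma 6.2 a nonzero $\Phi$-WIT$_1$ quotient $B$ must satisfy a slope bound forcing $\mu B$ to be \emph{small} (bounded above by something like $\frac{c}{a}$ for the functor $\Phi$, by symmetry with the $\Psi$ case where the bound was $\mu A \ge \frac{b}{a}$).

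First I would record the numerical translation: applying the matrix identity in Theorem~\ref{Thrm:Br_birat}(vi) to $F$ with $(\rk F, \fdeg F) = (1, d')$ for the appropriate fiber degree $d'$ of the class $w$, one sees that the rank of $\Phi F$ is $c + a d'$, and the hypothesis $\frac{b}{a} > \frac{d}{r}$ (equivalently $\rk w = br - ad > 0$, as in Remark~\ref{Rem:numerology}) pins down the sign of the relevant slopes on the $Y$-side. Concretely, a vector bundle $F \in \Nc$ is slope-semistable (indeed stable, since we are in a situation where semistable $=$ stable and the polarization is suitable), so any subsheaf or quotient has slope controlled by $\mu F = \frac{d'}{1} = d'$. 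Second, I would invoke the WIT exact sequence above: if $F$ is not $\Phi$-WIT$_0$ then $B \ne 0$, so $B$ is a nonzero $\Phi$-WIT$_1$ quotient of $F$, and the slope bound from the ($\Phi$-version of) Bridgeland's Lemma 6.2 gives $\mu B \le \frac{c}{a}$ or the analogous inequality; third, I would combine this with stability of $F$, which forces $\mu B \ge \mu F = d'$, and check that the two inequalities are incompatible under the standing hypothesis $\frac{b}{a} > \frac{d}{r}$ together with the $\SLZ$-relations among $a,b,c,d,e$. This contradiction shows $B = 0$, hence $A = F$ and $F$ is $\Phi$-WIT$_0$.

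One subtlety I would be careful about: $F$ is a vector bundle, so it has no torsion subsheaves, but the quotient $B$ in the WIT sequence need not be torsion-free; I would want the slope bound on $\Phi$-WIT$_1$ sheaves (Bridgeland's Lemma 6.2 / 6.3 circle of ideas) to apply to $B$ regardless, using that $B$ being $\Phi$-WIT$_1$ already constrains it strongly (fiber sheaves are $\Phi$-WIT$_1$ with a fixed slope, torsion-free $\Phi$-WIT$_1$ sheaves satisfy the slope bound). The only genuinely delicate point is getting the \emph{direction} of all the slope inequalities right and confirming that the numerical hypothesis $\frac{b}{a} > \frac{d}{r}$ is exactly what makes $\mu F = d' > \frac{c}{a}$ (or whatever the precise bound is) --- this is where a small computation with the matrix $\varphi \in \SLZ$ and the definition $w = \Psi v[1]$ is unavoidable, but it is routine once the setup is fixed.

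The main obstacle will be cleanly transporting Bridgeland's $\Psi$-WIT lemmas (Lemmas 6.1--6.3 of \cite{Br}, which are stated for the specific functor in his setup) to the functor $\Phi$ here and extracting the correct slope bound for $\Phi$-WIT$_1$ quotients; once that bound is in hand, the rest is a short numerical contradiction using stability of $F$ and the standing inequality.
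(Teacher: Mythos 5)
Your route is genuinely different from the paper's, and it does not close. The paper does not use the WIT filtration at all: it invokes Bridgeland's Lemma 6.5, namely that $F$ is $\Phi$-WIT$_0$ if and only if $\Hom{Y}{F,\Qc_x}=0$ for every $x\in X$, and then checks this vanishing by comparing the fiber slope of $F$, read off from the matrix as $(\rk F,\fdeg F)=(br-ad,\,cd-er)$, with the slope $-\tfrac{c}{a}$ of the stable fiber sheaf $\Qc_x$; the vector bundle hypothesis enters because the restriction of $F$ to any fiber is torsion-free, and the inequality $\mu F>-\tfrac{c}{a}$ reduces to $\det\psi=bc-ae=1$. In your version the gap sits exactly at the point you flag and then wave away: the $\Phi$-WIT$_1$ quotient $B$ in the filtration $0\to A\to F\to B\to 0$ may be a torsion sheaf supported on fibers, and slope stability of $F$ gives no lower bound on the fiber slope of a torsion quotient (such a quotient corresponds to a subsheaf of the same rank with strictly smaller $c_1\cdot H$, so stability is silent there). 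Your proposed patch --- ``fiber sheaves are $\Phi$-WIT$_1$ with a fixed slope'' --- is not correct: skyscrapers, and more generally fiber sheaves of slope above the threshold, are $\Phi$-WIT$_0$, while a $\Phi$-WIT$_1$ fiber sheaf only satisfies an upper bound $\mu\le-\tfrac{c}{a}$, which is precisely the dangerous range rather than a contradiction. Also note the threshold is $-\tfrac{c}{a}$, not $\tfrac{c}{a}$.

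The tell is that your argument never uses local freeness of $F$ in an essential way: you use it only to exclude torsion subsheaves, which is not where the trouble lies. But the statement is false for general stable torsion-free sheaves of class $w$: in the rank-one case, twists of ideal sheaves $I_Z$ with $Z$ meeting some fiber in large length are stable and torsion-free yet fail to be $\Phi$-WIT$_0$ --- this is exactly why $\Nc^\circ\subsetneq\Nc$ in general and why part (ii) of Theorem \ref{Thrm:gen_birat} carries the extra hypothesis $r>at$. For such a sheaf the $\Phi$-WIT$_1$ quotient is precisely a torsion fiber sheaf of low slope, so any argument invoking only stability and torsion-freeness of $F$ must break at that step. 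To repair your proof you would need to use local freeness to control the slopes of quotients of the fiber restrictions of $F$ (equivalently, of torsion quotients of $F$ supported on fibers), which is exactly the role played in the paper by testing $F$ against the stable sheaves $\Qc_x$ fiber by fiber.
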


\begin{proof}
    By Bridgeland's Lemma 6.5 \cite{Br},
    a sheaf $F$ on $Y$ is $\Phi$-WIT$_0$ iff for every $x \in X$, we have the vanishing
    $\Hom{Y}{F,\Qc_x} = 0$.
    
    Recall that $\Qc_x$ is a stable torsion-free sheaf of slope $\mu \Qc_x = -\frac{c}{a}$.
    
    Restriction of $F$ to any fiber is torsion free (because $F$ is a vector bundle) and of slope $\frac{\fdeg F}{\rk F}$, where:
    $$  \begin{pmatrix} \rk F \\ \fdeg F \end{pmatrix}
    = - \psi \begin{pmatrix} \rk v \\ \fdeg v \end{pmatrix} =
    \begin{pmatrix} b&-a \\ -e&r \end{pmatrix}
    \begin{pmatrix} r\\d \end{pmatrix} =
    \begin{pmatrix} br-ad \\ cd-er \end{pmatrix}
    \mbox{.} $$
    
    The required vanishing of Hom's will follow from
    $\mu F > \mu \Qc_x$, which is equivalent to
    $\frac{er-cd}{br-ad} < \frac{c}{a}$.
    From our assumptions, both denominators are positive, so this inequality is in turn equivalent to $a(er-cd) < c(br-ad)$. But this tautologically follows from $1 = \det \psi = bc-ae$, and this concludes the proof.
\end{proof}

Now we would like state our main theorem of this section, and for that we will first summarize the notation.

\begin{Not}[For Theorem \ref{Thrm:gen_birat}]
\label{Not:gen_birat}
    Recall that $X$ was the fixed elliptic surface with a fixed topological or oriented topological K-theory vector $v = (r, \Lambda, k) \in \Kt X$ such that 
    all semistable sheaves of class $v$ are stable. Fix an $\SLZ$ matrix such that $a>0$ and $e$ is a multiple of $\lambda_X$:
    $$  \varphi = \begin{pmatrix} c&a \\ e&b \end{pmatrix}
    \mbox{.}$$
    Define the elliptic surface $Y = J_X(a,b)$ and choose a Fourier-Mukai transform $\Phi = \FMYX^{\Pc}$ that corresponds to the matrix $\varphi$ as follows:
    $$ \mbox{For any } F \in \Db Y:
    \mbox{ }
    \begin{pmatrix} \rk (\Phi F) \\ \fdeg (\Phi F) \end{pmatrix} = \varphi 
    \begin{pmatrix} \rk F \\ \fdeg F \end{pmatrix}
    \mbox{.} $$
    Let $\Psi = \FMXY^\Qc$ be the ``almost inverse'' transform to $\Phi$ with the corresponding matrix $\psi \in \SLZ$, that is $\Qc = \Pc^\vee[1]$ as well as $\Phi \Psi = [1]$, $\Psi \Phi = [1]$, and $\varphi \psi = \psi \varphi$ are equal to the negative identity matrix. Moreover, we pick $\Pc$ in such a way that $\Pc_x$ and $\Qc_x$ are stable fiber sheaves on $Y$, for any $x \in X$.
    
    Let $\Mc_v(X)$ and $\Mc_w(Y)$ be the moduli spaces of stable sheaves of classes $v$ and $w$ on the elliptic surfaces $X$ and $Y$, respectively. 
\end{Not}

\begin{Thrm}
\label{Thrm:gen_birat}
    Use Notation \ref{Not:gen_birat}.
    Assume that $r>0$ and $d = \Lambda.f$ are coprime.
    Let $w$ be such that $\ch w = -(\ch \Psi)(\ch v)$.
    \begin{enumerate}[label=(\roman*)]
    \item 
    Assume that either $\rk w > 1$, or $\rk w = 1$ and $r>a$.
    Then $\Mc = \Mc_{v}(X)$ is birationally isomorphic to $\Nc = \Mc_w(Y)$.
    \item \label{FM_codim_two}
    If $\rk w = 1$ and $r > at$, for $t = \dim \Mc_{w\vt} - \dim \Pic X$, then the birational isomorphism is a regular isomorphism.
    \end{enumerate}
\end{Thrm}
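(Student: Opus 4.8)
The plan is to set up a Fourier--Mukai correspondence between $\Mc = \Mc_v(X)$ and $\Nc = \Mc_w(Y)$ using the transforms $\Phi$ and $\Psi$ from Notation~\ref{Not:gen_birat}, and to establish a birational isomorphism by identifying a common open dense locus on both sides. Concretely, I would first invoke the preceding lemmas: every stable sheaf on $X$ of class $v$ is $\Psi$-WIT$_1$ (so $\Psi E[1]$ is a sheaf for all $E \in \Mc$), and conversely the loci $\Mc^\circ$ and $\Nc^\circ$ are matched by $\Phi$ and $\Psi$ (a sheaf $F \in \Nc^\circ$ has torsion-free, hence stable, transform $\Phi F \in \Mc^\circ$, and one checks the reverse direction symmetrically using the WIT exchange under $\Phi$, $\Psi$). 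This gives mutually inverse bijections $\Mc^\circ \leftrightarrow \Nc^\circ$ of the sets of points; the numerical hypothesis $\rk w = br - ad > 0$ (Remark~\ref{Rem:numerology}) is exactly what makes these loci non-empty and the slope inequalities work.

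\emph{Density of the open loci.} For part (i), I must show $\Mc^\circ$ and $\Nc^\circ$ are dense. Here is where the case split $\rk w > 1$ versus $\rk w = 1$, $r > a$ enters. By Bridgeland's Theorem~\ref{Br_smooth} both $\Mc$ and $\Nc$ are smooth and irreducible (for the topological class; the oriented case fibers over $\Pic$ and is handled the same way). Irreducibility reduces density to non-emptiness of the open locus. When $\rk w \geq 1$, Lemma~\ref{Lem:vb_in_N} tells us every vector bundle in $\Nc$ lies in $\Nc^\circ$, and vector bundles are dense in $\Nc$ exactly when $\rk w \geq 2$, or when $\rk w = 1$ provided the analogous rank/fiber-degree bound holds --- this is where $r > a$ is needed, to guarantee (via the matrix computation in the proof of Lemma~\ref{Lem:vb_in_N}, where $\rk \Qc_x = a$ and $\rk F = br - ad$) that $\Nc^\circ$ still contains a dense open set of honest sheaves even in the rank-one case. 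Symmetrically, $\Mc^\circ$ is non-empty and dense because a general $E \in \Mc$ has $\Psi E[1]$ torsion-free (again a slope/WIT argument on fibers). Once both loci are dense open and the FM transforms are inverse bijections between them that are moreover morphisms of schemes (the transforms being relative, they carry flat families to flat families, so they induce morphisms on the moduli functors), we conclude $\Mc \dashrightarrow \Nc$ is a birational isomorphism.

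\emph{Upgrading to a regular isomorphism in part (ii).} When $\rk w = 1$ the moduli space $\Nc = \Mc_w(Y)$ is, up to the Picard factor, the Hilbert scheme $Y^{[t]}$ with $t = \dim \Mc_{w\vt} - \dim \Pic Y$. The point is that the complement $\Nc \setminus \Nc^\circ$ can be controlled: a sheaf $F$ fails to be $\Phi$-WIT$_0$ only if $\Hom{Y}{F, \Qc_x} \neq 0$ for some $x$, and by the slope estimate in Lemma~\ref{Lem:vb_in_N} this forces the fiberwise restriction of $F$ to be unstable on at least one fiber --- equivalently the rank-one sheaf $F$, viewed as $I_Z \otimes (\text{line bundle})$, has its subscheme $Z$ meeting a fiber in length $\geq$ (something governed by the ratio $r/a$). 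The hypothesis $r > at$ makes this impossible: a length-$t$ subscheme cannot have such a large fiberwise component, so in fact $\Nc^\circ = \Nc$. Then $\Phi$ is defined everywhere on $\Nc$ and lands in $\Mc^\circ$; to see $\Mc^\circ = \Mc$ as well I would use that $\Mc$ is smooth and $\Phi(\Nc) = \Mc^\circ$ is then a smooth (hence normal) dense open subset whose complement has codimension $\geq 2$ --- precisely the ``defined over the locus of vector bundles'' remark in the paper's outline --- and invoke that a birational morphism between smooth projective varieties which is an isomorphism away from codimension $\geq 2$, with one side having trivial such locus, is an isomorphism; more directly, since the inverse $\Psi$ is also everywhere-defined on $\Mc$ under these hypotheses (same slope argument with the roles of $X$, $Y$ swapped, using $r > at$), the two morphisms are mutually inverse everywhere, giving $\Mc \cong \Nc$.

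The main obstacle I anticipate is the rank-one bookkeeping in both parts: making the slope inequalities and the determinant relation $bc - ae = 1$ do exactly the right work to (a) keep $\Nc^\circ$ dense when $\rk w = 1$ under the mild bound $r > a$, and (b) force $\Nc^\circ$ to be everything under the stronger bound $r > at$, by translating ``$\Phi$-WIT$_0$ fails'' into a precise statement about the fiberwise length of the subscheme parametrized by a point of $Y^{[t]}$. Getting the constant $t$ in the inequality $r > at$ to come out correctly --- i.e.\ that the worst-case obstruction is a subscheme entirely concentrated, with multiplicity, in fibers --- is the delicate quantitative heart of the argument.
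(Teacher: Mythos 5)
Your overall strategy matches the paper's: the same open loci $\Mc^\circ$, $\Nc^\circ$, reduction of density to nonemptiness via smoothness and irreducibility (Theorem \ref{Br_smooth}), Bridgeland's criterion that $F$ is $\Phi$-WIT$_0$ iff $\Hom{Y}{F,\Qc_x}=0$ for all $x$, vector bundles via Lemma \ref{Lem:vb_in_N} when $\rk w>1$, fiberwise control of the parametrized subscheme when $\rk w=1$, and properness of $\Nc$ for part (ii). But there is a genuine gap exactly at the point you yourself flag as the ``quantitative heart'': you never verify the slope inequality that makes the hypotheses $r>a$ and $r>at$ do their work. The paper's argument for $\rk w=1$ takes $F$ a twist of an ideal sheaf $I_Z$, restricts to the (nonempty, open) locus where $Z$ meets every fiber in length at most $s$, reads off $\rk F = br-ad=1$ and $\fdeg F = cd-er$ from the matrices, bounds the slope of the torsion-free part $F'$ of a fiber restriction by $\mu F' \geq -s + (cd-er) > -\frac{r}{a} + cd - er$ when $r>sa$, and then uses \emph{both} relations $bc-ae=1$ and $br-ad=1$ to compute $-\frac{r}{a}+cd-er = -\frac{c}{a}(br-ad) = -\frac{c}{a}$, so that $\mu F' > -\frac{c}{a}$ gives the required Hom-vanishing; taking $s=1$ yields (i), and $s=t$ yields (ii) because a length-$t$ subscheme can never meet a fiber in length exceeding $t$, whence $\Nc^\circ=\Nc$. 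Without this computation, the specific bounds appearing in the statement remain unmotivated assertions, and your ``something governed by the ratio $r/a$'' is precisely the content that has to be supplied.

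Two further points. In part (i) for $\rk w=1$ you argue via density of vector bundles in $\Nc$, which is false whenever the length $t$ is positive: the generic member of a rank-one moduli space is a twisted ideal sheaf, not locally free, so Lemma \ref{Lem:vb_in_N} cannot carry that case; the correct replacement is the open locus of subschemes meeting each fiber in length at most one, handled by the slope estimate above. And in part (ii), your detour through a codimension-two comparison and the claim that $\Psi$ is ``everywhere defined on $\Mc$'' under $r>at$ (asserted by symmetry but not proved) is both unjustified as written and unnecessary: once $\Nc^\circ=\Nc$, the paper concludes directly from properness of $\Nc$ together with irreducibility and smoothness of $\Mc$ that the birational map of part (i) is a regular isomorphism.
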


\begin{proof}
    By Remark $\ref{Rem:numerology}$,
    our assumption $\rk w > 0$ is equivalent to $br-ad>0$, which is an assumption that is used in some of the previous lemmas, so we can apply those freely.

    By the results of the previous lemmas, there are two isomorphic open subsets, namely $\Mc^\circ \subset \Mc$ and $\Nc^\circ \subset \Nc$. By Bridgeland's Theorem \ref{Br_smooth}, the spaces $\Mc$ and $\Nc$ are irreducible, hence it is enough to prove that either of the open subsets are nonempty, and then the birationality will follow.
    
    Let us prove that $\Nc^\circ$ is nonempty.
    By Bridgeland's Lemma 6.5 \cite{Br}, a sheaf $F \in \Nc$ is $\Phi$-WIT$_0$ if and only if for any $x \in X$:
    $$\Hom{Y}{F,\Qc_x} = 0.$$
    But $\Qc_x$ is the pushforward of some stable torsion-free sheaf of slope $-\frac{c}{a}$. So if we prove that the slope of the torsion-free part of the restriction of $F$ to the fiber has greater slope, then we get the desired Hom-vanishing.

    For $\rk w > 1$, use Lemma \ref{Lem:vb_in_N} to observe that all vector bundles are in $\Nc^\circ$, and since the rank is greater than one, stable vector bundles exist.

    Assume now that $\rk w = 1$.
    Take $F \in \Nc$ which is a twist of an ideal sheaf corresponding to some finite length scheme $Z \subset Y$.
    For a positive number $s$, assume that scheme-theoretic intersection of $Z$ with any fiber has length at most $s$ -- this defines an open subset in $\Nc$.
    We will prove that under the assumption that $r > sa$ for some positive number $s$, the slope of the torsion-free part of $F_{\pi(x)}$ is greater than $-\frac{c}{a}$. Let us denote this torsion-free sheaf on a (possibly singular) fiber by $F'$.
    
    Note that from the matrix $\varphi$ we can read off  the rank and fiber degree of $F$ from the vector
    $\varphi^{-1} v = w$,
    so we get $1 = \rk F = br-ad$
    and $\fdeg F = cd - er$.
    Further, the intersection of $Z$ with the fiber decreases the slope of $F'$ by at most $s$, so we have $\mu F' \geq -s + \fdeg F > - \frac{r}{a} + cd-er$. Since $1 = \det \varphi = bc - ae$, we can rewrite the right hand side as follows:
    $$
    - \frac{r}{a} + cd-er =
    - (bc-ae) \frac{r}{a} + cd-er =
    - \frac{bcr}{a} + cd =
    - \frac{c}{a} (br - ad)
    .$$
    Further, recall that $1 = \rk w = br-ad$, so we in fact have:
    $$
    - \frac{r}{a} + cd-er =
    - \frac{c}{a}
    .$$
    Therefore, $\mu F' > - \frac{c}{a}$, and considering in particular $s=1$ gives the first part.
    For the second part, put $s=t$ and observe that this gives no restriction on the choice of an ideal, therefore in this case, we have $\Nc^\circ = \Nc$. And since $\Nc$ is proper, the birational isomorphism given in the first part should be an isomorphism of schemes.
\end{proof}

\begin{Cor}
\label{Cor:K3_birat}
    Use Notation \ref{Not:gen_birat}. Assume that $X$ is a complex elliptic K3 surface (and therefore $Y$ is, too). Assume also that $\rk w \geq 3$.
    Then $\Mc = \Mc_{v}(X)$ is birationally isomorphic to $\Nc = \Mc_w(Y)$, and the singular locus of the birational isomorphism has codimension at least two.
\end{Cor}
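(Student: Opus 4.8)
The plan is to deduce this corollary directly from Theorem \ref{Thrm:gen_birat} together with the special geometry of K3 surfaces. First I would observe that the birational isomorphism itself is immediate: since $\rk w \geq 3 > 1$, part (i) of Theorem \ref{Thrm:gen_birat} applies verbatim (the hypothesis there is exactly $\rk w > 1$, or $\rk w = 1$ with $r > a$), giving that $\Mc = \Mc_v(X)$ and $\Nc = \Mc_w(Y)$ are birational. The content of the corollary is therefore the codimension-two estimate on the locus where this birational map fails to be an isomorphism.

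The key point is that the open loci $\Mc^\circ \subset \Mc$ and $\Nc^\circ \subset \Nc$ from \S\ref{Subsec:FM_general} contain, respectively, all locally free sheaves on $X$ that are $\Psi$-WIT$_1$ and all locally free sheaves on $Y$ (the latter by Lemma \ref{Lem:vb_in_N}, using $\rk w > 0$). So it suffices to show that on a moduli space of stable sheaves of rank at least three on a K3 surface, the complement of the locus of vector bundles has codimension at least two, and similarly on the other side — and then intersect with the (already large) WIT locus. Concretely, I would argue: (a) on $\Nc$, every vector bundle lies in $\Nc^\circ$, so $\Nc \setminus \Nc^\circ$ is contained in the non-locally-free locus; (b) the non-locally-free locus of a moduli space of stable sheaves of rank $r' \geq 3$ on a smooth projective surface has codimension at least two — this is a standard fact (one elementary-modification argument shows that near a sheaf $E$ that is not locally free, one can deform $E$ in at least two independent directions keeping it non-locally-free is false; rather, the correct statement is that being locally free is an open condition whose complement has codimension $\geq 2$ when $r' \geq 3$, because the locus of sheaves with a fixed singularity type has the expected codimension and for rank $\geq 3$ even the "mildest" non-locally-free sheaves sit in codimension $\geq 2$; this uses that $\Mc$ is smooth of the expected dimension, which is Theorem \ref{Br_smooth}); (c) transport the same estimate to $\Mc^\circ$ via the isomorphism $\Nc^\circ \cong \Mc^\circ$, noting that the isomorphism identifies complements, and that on the $\Mc$ side one additionally throws away the WIT-failure locus, which by the first Lemma of \S\ref{Subsec:FM_general} (every stable $E$ is $\Psi$-WIT$_1$ when $\tfrac{b}{a} > \tfrac{d}{r}$, i.e. $\rk w > 0$) is in fact empty, so no loss occurs there.

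Assembling these: $\Mc \setminus \Mc^\circ$ and $\Nc \setminus \Nc^\circ$ are each contained in the respective non-locally-free loci, which have codimension $\geq 2$ because the ranks $r$ and $\rk w$ are $\geq 3$ (here I use $\rk w \geq 3$ directly, and for $X$ one uses $r \geq \rk w \geq 3$ — wait, one should check $r \geq 3$; since $r > d$ coprimality plus $\rk w = br - ad \geq 3$ with $0 < a < r$ forces $r \geq 3$ as well, which I would spell out). Hence the birational isomorphism is defined and an isomorphism away from a closed subset of codimension $\geq 2$ on both sides. The main obstacle I anticipate is step (b): making precise and citing (or proving) that the non-locally-free locus in a moduli space of stable sheaves of rank $\geq 3$ on a surface has codimension $\geq 2$. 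The cleanest route is to invoke the deformation theory of sheaves with fixed double dual — given a non-locally-free stable $E$ of rank $r'$, write $0 \to E \to E^{\vee\vee} \to Q \to 0$ with $Q$ of finite length $\ell \geq 1$; the stratum of sheaves with this numerical type has codimension roughly $(r'-1)\ell$ inside $\Mc$, which is $\geq 2$ as soon as $r' \geq 3$. I would state this as a lemma, prove the codimension bound by a dimension count comparing $\dim \mathrm{Quot}$ to the drop in $\chi$, and then conclude. Everything else is bookkeeping with Theorem \ref{Thrm:gen_birat} and Lemma \ref{Lem:vb_in_N}.
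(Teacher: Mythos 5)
Your first two steps match the paper: birationality follows from Theorem \ref{Thrm:gen_birat}(i), and on the $\Nc$ side Lemma \ref{Lem:vb_in_N} plus the fact that for rank $\geq 3$ the non-locally-free locus has codimension $\geq 2$ gives $\mathrm{codim}(\Nc \setminus \Nc^\circ) \geq 2$ (the paper simply asserts this codimension bound; your double-dual stratification sketch is a reasonable way to justify it). The genuine gap is in how you transfer the estimate to the $\Mc$ side. You claim that the isomorphism $\Nc^\circ \cong \Mc^\circ$ ``identifies complements'' and that $\Mc \setminus \Mc^\circ$ is contained in the non-locally-free locus of $\Mc$. Neither is justified: $\Mc^\circ$ is defined by torsion-freeness of the transform $\Psi E[1]$, not by local freeness of $E$, and no lemma in \S\ref{Subsec:FM_general} says that a locally free $E \in \Mc$ has torsion-free transform; moreover, a birational map between smooth varieties can in general contract a divisor to a small locus, so knowing $\mathrm{codim}(\Nc \setminus \Nc^\circ) \geq 2$ by itself says nothing about $\mathrm{codim}(\Mc \setminus \Mc^\circ)$.

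This is exactly where the K3 hypothesis enters, and your argument never uses it --- a warning sign, since Theorem \ref{Thrm:SD_elliptic} needs much stronger numerical bounds precisely because general elliptic surfaces lack this structure. The paper's route: since $X$ and $Y$ are K3, the moduli spaces $\Mc$ and $\Nc$ are hyperK\"ahler, and a birational map between hyperK\"ahler (more generally, $K$-trivial minimal) varieties is an isomorphism in codimension one; hence once the singular locus of $\Nc \dashrightarrow \Mc$ has codimension $\geq 2$, the same automatically holds for the inverse map $\Mc \dashrightarrow \Nc$ (this is the Marian--Oprea argument cited at the end of page 2076 of \cite{MO}). To repair your proof you should replace step (c) by this hyperK\"ahler argument; as written, the side remark about forcing $r \geq 3$ also relies on the constraint $0 < a < r$, which is not part of Notation \ref{Not:gen_birat}, and in any case does not address the real issue of controlling where $\Psi E[1]$ acquires torsion.
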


\begin{proof}
    It follows from the proof of Theorem \ref{Thrm:gen_birat} that with the additional assumption that $\rk w$ is at least 3, we can show birationality in codimension two for K3 surfaces. The modification is as follows.
    If $\rk w > 1$, then the locus of vector bundles is nonempty, hence by Lemma \ref{Lem:vb_in_N} we have that $\Nc^\circ$ is nonempty.
    Moreover, for $\rk w > 2$, the non-vector bundle locus is of codimension at least two, and hence the codimension of the complement $\Nc \setminus \Nc^\circ$ is at least two.
    Now we have a birational isomorphism of two hyperK\"ahler varieties, and the singular locus of the rational morphism $\Nc \to \Mc$ is of codimension at least two. Therefore, the singular locus of the inverse rational morphism $\Mc \to \Nc$ also is of codimension at least two (see \cite{MO}, end of page 2076, for the argument).
\end{proof}

\begin{Rem}[Difference from Bridgeland's results]

The reader may observe that even if we obtained birationalities between moduli spaces of sheaves of higher rank, we could still use Bridgeland's results as follows.
Taking two moduli spaces, we can find a Hilbert scheme of points to which both of them are birationally equivalent. So it is a reasonable question to ask why we did all the detailed work in this section.
In our argument, we say that these birational equivalences are given by a certain Fourier-Mukai transformation, and while it is natural to expect that a composition of Bridgeland's birationalities is given by a Fourier-Mukai kernel, too, we don't have a proof of this at hand.
We also see additional benefit of our formulation in that it allows for a more general view on the moduli spaces, and we have proved birationalities in the new setting of oriented K-theory vectors.
\end{Rem}
\section{Application: the Strange Duality for elliptic surfaces}

\label{Sec:SD}

In this section, we will prove the Strange Duality for certain K-theory vectors on elliptic surfaces. We will employ the Marian-Oprea trick of reducing the question about higher rank moduli spaces to Hilbert schemes, and use their result for Hilbert schemes.


Let $Y$ be an elliptic surface and let $\pi': Y \to C$ be the fibration with the class of a fiber $f$. Let $\lambda = \lambda_Y$ be the smallest possible positive fiber degree of a sheaf.

We assume that we work with two oriented topological K-theory vectors $v , w \in \Kor Y$ which are orthogonal:
\begin{equation*}
\begin{split}
    v &= (1 , \chn_1 v , \chn_2 v) = (1, \Lambda_v, k_v);\\
    w &= (1 , \chn_1 w , \chn_2 w) = (1, \Lambda_w, k_w);\\
    \Oc(\Lambda_v + \Lambda_w) & \mbox{ does not have higher cohomology.}
\end{split}
\end{equation*}

So from now on, when we write the vectors $v$ and their associated geometric objects $\Mc_v$, we assume that the determinant is fixed. We do it because we use the following theorem 
\ref{Thrm:MO_Hilb} by Marian-Oprea as our base case.
They consider Hilbert schemes of points -- in this case, the determinant is fixed -- and for these,
we have the Strange Duality isomorphism.

\begin{Thrm}[Marian-Oprea \cite{MO08}, Proposition 1]
\label{Thrm:MO_Hilb}
For any surface $X$, for any line bundle $L$ on $X$ with $\chi(L) = n$ and no higher cohomology,
the theta locus is a divisor and the Strange Duality map is an isomorphism for a pair of vectors of the form
$v = (1,\Oc_X,-k) \in \Kor X$ and
$w = (1,L,\frac{1}{2} L^2 - l) \in \Kor X$, where $k$ and $l$ are positive integers that sum up to $k+l=n$.
\end{Thrm}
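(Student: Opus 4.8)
The plan is to reformulate everything in terms of Hilbert schemes of points and then reduce the duality to the perfectness of a wedge pairing on $\bigwedge^\bullet \mathrm{H}^0(X,L)$. Since we are in the rank-one, fixed-determinant (oriented) situation, the moduli spaces are $\Mc_v(X) = X^{[k]}$ and $\Mc_w(X) = X^{[l]}$, parametrizing ideal sheaves $\mathcal{I}_Z$, $\mathcal{I}_W$ of finite subschemes of lengths $k$ and $l$; write $V \deq \mathrm{H}^0(X,L)$, so $\dim V = n = k+l$ by the no-higher-cohomology hypothesis. The first task is to pin down the two theta line bundles: $\Theta_v$ on $X^{[k]}$ and $\Theta_w$ on $X^{[l]}$ are the determinant-of-cohomology line bundles attached respectively to a sheaf of class $w$ and a sheaf of class $v$, and the standard computation identifies $\Theta_v$ with $\det L^{[k]}$, the determinant of the tautological rank-$k$ quotient $L^{[k]}$ of $V \otimes \Oc_{X^{[k]}}$ with fibre $\mathrm{H}^0(\Oc_Z \otimes L) = V/\mathrm{H}^0(L\otimes \mathcal{I}_Z)$ at $Z$, and similarly for $\Theta_w$. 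The Strange Duality section is the canonical section $\sigma$ of the jumping locus on $X^{[k]}\times X^{[l]}$ of the pushforward of $\mathcal{I}_Z \otimes \mathcal{I}_W \otimes L$ (derived tensor of the two universal ideal sheaves, twisted by $L$); it lives in $\mathrm{H}^0(X^{[k]}\times X^{[l]}, \Theta_v \boxtimes \Theta_w) = \mathrm{H}^0(X^{[k]},\Theta_v)\otimes \mathrm{H}^0(X^{[l]},\Theta_w)$, and it is defined because $v$ and $w$ are orthogonal, so the relevant Euler characteristic vanishes (concretely, for general disjoint $Z,W$ one has $\chi(\mathcal{I}_Z \otimes \mathcal{I}_W \otimes L) = \chi(L) - k - l = 0$, and by a general-position argument this cohomology vanishes for generic $(Z,W)$, so $\sigma$ is not identically zero). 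Geometrically, $\sigma(Z,W) = 0$ precisely when $V \to \mathrm{H}^0(\Oc_Z\otimes L)\oplus \mathrm{H}^0(\Oc_W\otimes L)$ fails to be an isomorphism, equivalently when some nonzero $s\in V$ vanishes on both $Z$ and $W$.

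The proof then has three steps. First, \emph{identify the section spaces}: the classifying map $X^{[k]} \to \mathrm{Gr}(k,V)$ (Grassmannian of $k$-dimensional quotients of $V$), a morphism off its indeterminacy locus, pulls the Plücker bundle back to $\Theta_v = \det L^{[k]}$ and induces a map $\bigwedge^k V = \mathrm{H}^0(\mathrm{Gr}(k,V),\Oc(1)) \to \mathrm{H}^0(X^{[k]},\Theta_v)$; one must check this is an isomorphism, which comes down to the non-degeneracy of the classifying map together with the computation $h^0(X^{[k]},\det L^{[k]}) = \binom nk$ and the vanishing $\mathrm{H}^{>0}(X^{[k]},\det L^{[k]}) = 0$, both following from the no-higher-cohomology hypothesis via the Ellingsrud--G\"ottsche--Lehn type computation of the cohomology of tautological bundles on Hilbert schemes. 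Symmetrically $\mathrm{H}^0(X^{[l]},\Theta_w) \cong \bigwedge^l V$, and $\binom nk = \binom nl$ as the statement requires. Second, \emph{identify $\sigma$}: under these identifications the section corresponds, up to a nonzero scalar, to the coevaluation of the wedge pairing $\bigwedge^k V \otimes \bigwedge^l V \to \bigwedge^n V$, because by the determinant-of-cohomology description $\sigma(Z,W)$ is the image of a fixed generator of $\bigwedge^n V$ under the comultiplication $\bigwedge^n V \to \bigwedge^k V\otimes \bigwedge^l V$ followed by the two Plücker evaluations at $Z$ and $W$, and because $k+l=n$ exactly the bundle $\Theta_v$ is the \emph{first} Plücker power, so $\sigma$ is this pairing itself and not a positive power of it. Third, \emph{conclude}: the wedge pairing $\bigwedge^k V \otimes \bigwedge^l V \to \bigwedge^n V$ with $\dim V = n = k+l$ is perfect, so the induced map $\mathrm{H}^0(X^{[k]},\Theta_v)^\vee \to \mathrm{H}^0(X^{[l]},\Theta_w)$ is an isomorphism; and $\sigma$ is a nonzero section of a line bundle whose zero locus is nonempty (take $Z\cup W$ inside a common member of $|L|$, possible since $n = k+l \ge 2$), so the theta locus is an effective Cartier divisor.

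The main obstacle is the first step: verifying that the theta bundle on the Hilbert scheme really is the Plücker pullback (and not some other twist or positive power) and that the classifying map is non-degenerate, which requires a careful comparison of first Chern classes on $X^{[k]}$, control of the indeterminacy locus of the rational classifying map, and the general-position input that $k\le n$ general points impose independent conditions on $|L|$ --- this is exactly where the hypothesis that $L$ has no higher cohomology is used, both for these general-position facts and for the vanishing $\mathrm{H}^{>0}(X^{[k]},\det L^{[k]}) = 0$ that forces $h^0(X^{[k]},\det L^{[k]}) = \chi = \binom nk$. A closely related subtlety, again controlled by the identity $k+l=n$, is pinning the multiplicity of the theta divisor along its components to be exactly one in the second step; this is what makes $\sigma$ the wedge pairing itself, and it is also what makes the assertion that the theta locus is a divisor immediate once $\sigma$ is known to be a nonzero section.
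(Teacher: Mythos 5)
This theorem is not proved in the paper at all: it is imported verbatim as the base case from Marian--Oprea \cite{MO08} (Proposition 1), so there is no internal proof to compare against. Your outline does, in fact, reconstruct the shape of the Marian--Oprea argument: identify $\Mc_v$ and $\Mc_w$ with $X^{[k]}$ and $X^{[l]}$, identify the theta bundles with $\det L^{[k]}$ and $\det L^{[l]}$, identify their section spaces with $\bigwedge^k \Hrm^0(L)$ and $\bigwedge^l \Hrm^0(L)$, recognize the canonical theta section as the wedge pairing $\bigwedge^k V \otimes \bigwedge^l V \to \bigwedge^n V$, and conclude from the perfectness of that pairing. The general-position remarks (that $k\le n$ general points impose independent conditions once $\Hrm^{>0}(L)=0$, hence the section is nonzero and the theta locus is a genuine divisor) are fine.

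The genuine weak point is your first step, which you yourself flag as the main obstacle. The identification $\Hrm^0(X^{[k]},\det L^{[k]})\cong \bigwedge^k \Hrm^0(L)$ is not something that ``follows from the no-higher-cohomology hypothesis via an Ellingsrud--G\"ottsche--Lehn type computation'': EGL-style universality gives Euler characteristics of tautological classes (indeed $\chi(X^{[k]},\det L^{[k]})=\binom{\chi(L)}{k}$), but it cannot produce the vanishing $\Hrm^{>0}(X^{[k]},\det L^{[k]})=0$, and that vanishing does not follow from $\Hrm^{>0}(L)=0$ by any elementary argument. The statement you need is precisely Danila's theorem (later subsumed by Scala's derived-category formula $\Hrm^{*}(X^{[k]},\det L^{[k]})\cong \bigwedge^k \Hrm^{*}(L)$), and this is exactly the external input Marian--Oprea invoke at this point; without it your argument only gives surjectivity of the duality map onto $\bigwedge^l V$, since an unaccounted-for excess of sections of $\det L^{[k]}$ would kill injectivity. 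So the proof is correct in architecture, but as written it has a gap at its crux unless you replace the EGL reference by Danila/Scala (or supply an independent proof of that isomorphism); with that citation made precise, your write-up is essentially the \cite{MO08} proof, with the Grassmannian classifying-map language making the identification of the theta section with the wedge pairing a bit more explicit than in the original.
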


Let $\Mc_v$ and $\Mc_w$ denote the two Hilbert schemes of points on $Y$ (twisted by the line bundles $\Lambda_v$ and $\Lambda_w$) corresponding to $v$ and $w$, respectively.

Pick a matrix $\varphi \in \SLZ$:
$$ \varphi = \begin{pmatrix} c&a \\ e&b
\end{pmatrix}
$$
with $a>0$, the entry $e$ being the multiple of $\lambda$, and both $c$ and $-b$ greater than $a$.

Denote by $X$ the moduli space of stable fiber sheaves $Y = J_X(a,c)$ of rank $a$ and degree $c$, and let $\pi: X \to C$ be the fibration morphism, as $X$ is also an elliptically fibered surface with $\lambda_X = \lambda$.

There are tautological sheaves $\Pc$ and $\Qc$ on $X \times Y$ such that $\Qc$ para\-met\-rizes stable fiber sheaves on $Y$ of rank $a$ and degree $c$, and in addition $\Pc = \Qc^\vee[1]$ is a sheaf.
Since stability of a torsion-free sheaf is equivalent to stability of its nonderived dual, we can observe that the restriction $\Pc_x$ of $\Pc$ to any fiber of $X \times Y \to X$ also represents a stable sheaf on an elliptic fiber in $Y$.

\paragraph{Fourier-Mukai transforms.} For a recollection of the main definitions, see \S\ref{Subsec:FM_stage}. Let $\Phi$ denote the Fourier-Mukai transform $\FMYX^\Pc$ from $\Db Y$ to $\Db X$ with kernel $\Pc$. Let $\Qc \deq (\Pc^\vee \otimes \pi^* \omega_X)[1]$ and $\Xi \deq \FMYX^\Qc$. By Bridgeland's Theorem \ref{Thrm:Br_birat}, the functors $\Phi$ and $\Xi$ define equivalences of derived categories, and their quasi-inverses, up to a shift, are $\Psi \deq \FMXY^\Qc$ and $\Omega \deq \FMXY^\Pc$.
Pictorially, we represent the functors and their corresponding matrices as follows:
\begin{diagram}
    & \mbox{Kernel } \Pc &&&& \mbox{Kernel } \Qc & \\
    &\omega = \begin{pmatrix}b&a\\e&c\end{pmatrix} &&&&
    \psi = \begin{pmatrix}-b&a\\e&-c\end{pmatrix} \\
    \Db X & \pile{\rTo^\Omega \\ \lTo_\Phi} & \Db Y
    &&
    \Db X & \pile{\rTo^\Psi \\ \lTo_\Xi} & \Db Y \\
    &\varphi = \begin{pmatrix}c&a\\e&b\end{pmatrix} &&&&
    \xi = \begin{pmatrix}-c&a\\e&-b\end{pmatrix} \\
\end{diagram}
The matrices here represent the action of the functors on the vector consisting of rank and fiber degree of a sheaf, for example:
$$  \begin{pmatrix} \rk \Phi E \\ \fdeg \Phi E \end{pmatrix} =
    \varphi \begin{pmatrix} \rk E \\ \fdeg E \end{pmatrix}
.$$

We would like to list the relations these functors satisfy, obtained from Bridgeland's setup and the Grothendieck-Riemann-Roch theorem, for any $E \in \Db X$:
\begin{equation}
\label{Eq:functor_relations}
\begin{split}
    \Phi\Psi &= \Xi \Omega = [-1] , \\
    \Psi\Phi &= \Omega \Xi = [-1] , \\
    \Psi (E^\vee) & = (\Omega E)^\vee [-1]
\mbox{.}
\end{split}
\end{equation}

\paragraph{Main objective.} We want to prove that for $E \in \Mc_v$ and $E' \in \Mc_w$ away from a codimension two locus, the definition of the theta-divisor on $\Mc_v(Y) \times \Mc_w(Y)$ -- that is, the locus where $\Hrm^0(E \otimes E') \neq 0$ -- can be transported to a definition of the theta-divisor on
$$\Mc_{\overline{\Xi v}}(X) \times \Mc_{\overline{\Phi w} \otimes K_X}(X)$$ using the Fourier-Mukai equivalences. Overlines denote taking derived dual of a representative of the K-theory class. 
Namely, we consider:
\begin{equation*}
\begin{split}
    \Hrm^1(Y, E \otimes E')
    &= \Hrm^1 \, \RHom{Y}{E^\vee , E'}
    = \Hrm^1 \, \RHom{X}{\Phi (E^\vee) , \Phi E'}
    = \\ &
    = \Hrm^1 \, \RHom{X}{(\Xi E)^\vee [-1] , \Phi E'}
    = \Hrm^2 (X, \Xi E \otimes \Phi E')
    = \\ &
    = \Hrm^0 \big(X, (\Xi E)^\vee \otimes (\Phi E')^\vee \big)^\vee
\mbox{,}
\end{split}
\end{equation*}
where we start by applying the functor $\Psi$, relations \eqref{Eq:functor_relations}, and finally the Serre duality.

So if we can prove that $\Xi$ and $\Phi$ establish birational isomorphisms in codimension two, then we will be able to deduce that the theta locus on $\Mc_{\overline{\Xi v}}(X) \times \Mc_{\overline{\Phi w}}(X)$
is a divisor and the theta line bundles have the same sections as their counterparts on the side of Hilbert schemes $\Mc_v \times \Mc_w$.

We can now state one of our main results, which is a direct consequence of Corollary \ref{Cor:K3_birat}:

\begin{Thrm}
\label{Thrm:SD_elliptic_K3}
    Keep the notation of this section.
    If we assume that
    $d_v \deq \Lambda_v.f > 2 + \frac{c}{a}$ and
    $d_w \deq \Lambda_w.f > 2 - \frac{c}{a}$,
    then the theta locus is a divisor and the Strange Duality holds for the pair of vectors $\overline{\Xi v}$ and $\overline{\Phi w}$.
\end{Thrm}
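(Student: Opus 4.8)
The plan is to run the ``Marian-Oprea trick'' sketched just before the statement: transport the theta data on the product of Hilbert schemes $\Mc_v\times\Mc_w$ over $Y$ to the product $\Mc_{\overline{\Xi v}}(X)\times\Mc_{\overline{\Phi w}}(X)$ over $X$ along the Fourier-Mukai equivalences, and then invoke Theorem~\ref{Thrm:MO_Hilb}. The crucial step is to produce the two required birational isomorphisms in codimension two. Reading off the matrices $\xi$ and $\varphi$, a sheaf $E\in\Mc_v$ (resp. $E'\in\Mc_w$) has fiber slope $d_v$ (resp. $d_w$), and the hypotheses $d_v>2+\frac{c}{a}$ and $d_w>2-\frac{c}{a}$ guarantee simultaneously that, by Bridgeland's WIT criterion, the generic such $E$ is $\Xi$-WIT$_0$ and the generic $E'$ is $\Phi$-WIT$_0$ (so that $\Xi E$ and $\Phi E'$ are honest sheaves), and that the ranks of $\Xi E$ and $\Phi E'$, namely $a\,d_v-c$ and $a\,d_w+c$, exceed $2a$ and hence, being integers, are at least $3$. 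As $X$ is a complex elliptic K3 surface and $\det\xi=\det\varphi=1$, these two classes have coprime rank and fiber degree, so Corollary~\ref{Cor:K3_birat} applies and yields codimension-two birational isomorphisms $\Mc_v(Y)\dashrightarrow\Mc_{\Xi v}(X)$ and $\Mc_w(Y)\dashrightarrow\Mc_{\Phi w}(X)$ induced by $\Xi$ and $\Phi$. Composing with derived dualization---which on a moduli space of stable torsion-free sheaves of rank $\ge3$ on a smooth surface is a regular isomorphism on the locus of locally free sheaves, whose complement has codimension at least two, and which preserves stability---one then obtains codimension-two birational isomorphisms $\Mc_v(Y)\dashrightarrow\Mc_{\overline{\Xi v}}(X)$ and $\Mc_w(Y)\dashrightarrow\Mc_{\overline{\Phi w}}(X)$, realized by $E\mapsto(\Xi E)^\vee$ and $E'\mapsto(\Phi E')^\vee$.

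Granting this, transporting the theta data is routine. The chain of natural isomorphisms displayed before the statement shows that, over the big open subsets on which all the manipulations are valid, $\Hrm^1(Y,E\otimes E')$ is canonically dual to $\Hrm^0\big(X,(\Xi E)^\vee\otimes(\Phi E')^\vee\big)$; hence on those loci the theta locus, and the determinantal theta line bundle, on $\Mc_v\times\Mc_w$ agree with their counterparts on $\Mc_{\overline{\Xi v}}(X)\times\Mc_{\overline{\Phi w}}(X)$, compatibly with the canonical defining sections. Since all four moduli spaces are smooth and projective, by Theorem~\ref{Br_smooth}, and the birational maps are isomorphisms away from codimension two, global sections of the theta line bundles extend across the complementary loci; so the spaces of global sections on the two sides are canonically identified, as is the section whose vanishing is the theta locus in the product. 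In particular, as that section is nonzero on the Hilbert-scheme side by Theorem~\ref{Thrm:MO_Hilb}, the theta locus on $\Mc_{\overline{\Xi v}}(X)\times\Mc_{\overline{\Phi w}}(X)$ is cut out by a nonzero section, hence is a divisor.

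Finally, under these identifications the Strange Duality map for $(\overline{\Xi v},\overline{\Phi w})$ on $X$---the map obtained from that defining section through the K\"unneth decomposition of the sections of the product theta bundle---becomes literally the Strange Duality map for the pair of Hilbert-scheme classes $(v,w)$ on $Y$, whose hypotheses (rank one, and $\Oc(\Lambda_v+\Lambda_w)$ without higher cohomology, possibly after the standard normalization that tensors $v$ by a line bundle and $w$ by its inverse and leaves every $E\otimes E'$ unchanged) are precisely those of Theorem~\ref{Thrm:MO_Hilb}. Hence the latter map, and therefore the former, is an isomorphism. The step I expect to be the real obstacle is the first one: pinning down exactly how the present data fit the hypotheses of Corollary~\ref{Cor:K3_birat}---which of the matrices $\omega,\varphi,\psi,\xi$ and which of the kernels $\Pc,\Qc$ play which role there, how the WIT index controls whether a shift enters, why the birational isomorphism it furnishes is the one induced by $\Xi$ (resp. $\Phi$), and why derived dualization is an isomorphism in codimension two of the rank-$\ge3$ moduli spaces in question---so that the cohomological computation above genuinely applies to it; the twist by $K_X$ appearing in the ``Main objective'' paragraph is trivial here, hence harmless.
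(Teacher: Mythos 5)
Your proposal follows essentially the same route as the paper: the paper's proof consists precisely of reading off $\rk \Xi v = ad_v - c$ and $\rk \Phi w = ad_w + c$ from the matrices $\xi$ and $\varphi$, noting the hypotheses force these ranks to be at least $3$, and then invoking Corollary \ref{Cor:K3_birat} together with the theta-transport computation of the ``Main objective'' paragraph and the base case Theorem \ref{Thrm:MO_Hilb}. Your additional bookkeeping (WIT indices, dualization in codimension two, extension of sections, K\"unneth) fills in details the paper leaves to the surrounding discussion, so the approaches coincide.
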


\begin{proof}
    To satisfy the assumptions of Corollary \ref{Cor:K3_birat}, we need to make sure that $\rk \Xi v \geq 3$ and $\rk \Phi w \geq 3$. From the matrix representation above, we can find a formula for $\rk \Xi v$:
    $$  \rk \Xi v = \left( \xi \cdot
    \begin{pmatrix} \rk v \\ \fdeg v \end{pmatrix} \right)^1 =
    \begin{pmatrix} -c & a \end{pmatrix}
    \begin{pmatrix} 1 \\ d_v \end{pmatrix}
    = ad_v - c
    \mbox{.}
    $$
    The requirement that $\rk \Xi v > 2$ is now evidently equivalent to asking that $d_v > 2 + \frac{c}{a}$. Rewriting $\rk \Phi w \geq 3$ in terms of $\ch w$ is fully analogous. 
\end{proof}

By returning the twist by the canonical bundle to the equations, we can obtain a result for general elliptic surfaces; since the moduli spaces of sheaves on those are not necessarily hyperK\"ahler, we need to assume stronger bounds:

\begin{Thrm}
\label{Thrm:SD_elliptic}
    Keep the notation of this section and let
    $t_v = \dim \Mc_{v\vor}$.
    If we assume that
    $d_v > t_v + \frac{c}{a}$ and
    $d_w > t_w - \frac{c}{a}$,
    then the theta locus is a divisor and the Strange Duality holds for the pair of vectors $\overline{\Omega v}$ and $\overline{\Psi w} \otimes K_X$.
\end{Thrm}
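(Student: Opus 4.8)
The plan is to run the argument of Theorem~\ref{Thrm:SD_elliptic_K3} almost verbatim, with one essential change: in place of the codimension-two birationality of Corollary~\ref{Cor:K3_birat} --- which is unavailable because the moduli spaces on a general elliptic surface need not be hyperK\"ahler --- we use the \emph{genuine} regular isomorphism provided by part~(ii) of Theorem~\ref{Thrm:gen_birat}, and the stronger numerical hypotheses are exactly what make that part applicable. I would organize the proof in three steps: first, transport the theta locus on $\Mc_v \times \Mc_w$ (a product of twisted Hilbert schemes on $Y$) to a theta locus on a product of two higher-rank moduli spaces on $X$, using the Fourier--Mukai equivalences and Serre duality; second, promote these two transports to isomorphisms of schemes via Theorem~\ref{Thrm:gen_birat}(ii); third, transport the Marian--Oprea Strange Duality isomorphism for the pair of Hilbert schemes (Theorem~\ref{Thrm:MO_Hilb}) through these isomorphisms.

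For the first step, I would repeat the chain of identities of the ``Main objective'' paragraph, but without assuming $\omega_X$ trivial. The only change is in the final Serre-duality step on $X$, which now contributes a twist by $\omega_X = K_X$; tracking this twist through the functor relations~\eqref{Eq:functor_relations} --- in particular through $\Qc = \Pc^\vee\otimes\pi^*\omega_X[1]$ and $\Psi(E^\vee)=(\Omega E)^\vee[-1]$ --- turns the pair $(\overline{\Xi v},\overline{\Phi w})$ of the K3 case into $(\overline{\Omega v},\overline{\Psi w}\otimes K_X)$, with the extra $K_X$ attached to a single factor. The outcome is a pointwise isomorphism $\Hrm^0(Y,E\otimes E')\cong \Hrm^0\big(X, A\otimes B\big)^\vee$ for $E\in\Mc_v$, $E'\in\Mc_w$ lying in the relevant WIT loci, where $A$ has class $\overline{\Omega v}$ and $B$ has class $\overline{\Psi w}\otimes K_X$. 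One also records here that $\rk\overline{\Omega v}=a d_v-c$ and $\rk\overline{\Psi w}=a d_w+c$ (the matrix computation is the one carried out for $\rk\Xi v$ and $\rk\Phi w$ in the proof of Theorem~\ref{Thrm:SD_elliptic_K3}), that these ranks are positive under our hypotheses, and that the two classes are again orthogonal, since Fourier--Mukai equivalences preserve the Euler pairing and the $K_X$-twist is compensated by Serre duality on $X$.

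For the second step, I would apply Theorem~\ref{Thrm:gen_birat}(ii) to each factor, with the roles of the two surfaces interchanged: the higher-rank moduli space $\Mc_{\overline{\Omega v}}(X)$ plays the role of ``$\Mc_v(X)$'' and the Hilbert scheme $\Mc_v$ plays the role of ``$\Mc_w(Y)$''. Its hypothesis $r>a t$, with $t=\dim\Mc_{v\vt}-\dim\Pic X = t_v$ (the two agree because $X$ and $Y$ have the same irregularity, so the topological Hilbert scheme on $Y$ fibers over a variety of dimension $\dim\Pic X$ with fiber $\Mc_{v\vor}$), becomes $a d_v-c> a t_v$, i.e. $d_v> t_v+\frac{c}{a}$ --- precisely the assumption of the theorem. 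Likewise the hypothesis for the $w$-factor becomes $a d_w+c> a t_w$, i.e. $d_w> t_w-\frac{c}{a}$. Hence both transports are regular isomorphisms of schemes, and in particular every object involved lies in the relevant WIT locus, so the pointwise identification of the first step holds over the entire products and yields an equality of theta loci. Since the theta line bundles are determinants of cohomology against complementary classes and $\chi$ is invariant under the Fourier--Mukai equivalences (Grothendieck--Riemann--Roch together with~\eqref{Eq:functor_relations}), the theta line bundles correspond under the isomorphisms, so the Strange Duality map for $\Mc_{\overline{\Omega v}}(X)\times\Mc_{\overline{\Psi w}\otimes K_X}(X)$ is identified (up to the evident duality) with the one for $\Mc_v\times\Mc_w$. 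The latter is an isomorphism by Theorem~\ref{Thrm:MO_Hilb}, because $\Oc(\Lambda_v+\Lambda_w)$ has no higher cohomology by our standing assumption, and the theorem follows.

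The main obstacle is conceptual, and it is exactly what forces the stronger hypotheses: on a non-Calabi--Yau surface a birational map of moduli spaces that is an isomorphism in codimension one on one side may still contract a divisor on the other, and then $\Hrm^0$ of the theta bundle is not preserved, so the codimension-two argument of Corollary~\ref{Cor:K3_birat} is genuinely unavailable and one must instead obtain an honest isomorphism --- which is what the bounds $d_v> t_v+\frac{c}{a}$ and $d_w> t_w-\frac{c}{a}$ secure through Theorem~\ref{Thrm:gen_birat}(ii). A secondary, bookkeeping, difficulty is to propagate the derived duals together with the $K_X$-twist through the relations~\eqref{Eq:functor_relations}, including reconciling the kernel $\Qc=\Pc^\vee\otimes\pi^*\omega_X[1]$ used here with the normalization $\Pc^\vee[1]$ appearing in Theorem~\ref{Thrm:gen_birat}, so as to land exactly on the classes $\overline{\Omega v}$ and $\overline{\Psi w}\otimes K_X$ and to verify that the Fourier--Mukai transform realizing the theta identification is the same one realizing the regular isomorphism.
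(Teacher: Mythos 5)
Your proposal is correct and follows essentially the route the paper intends: the paper gives no separate proof of this theorem beyond the remark preceding it, namely that one reruns the argument of Theorem~\ref{Thrm:SD_elliptic_K3} while ``returning the twist by the canonical bundle,'' replacing the hyperK\"ahler codimension-two statement of Corollary~\ref{Cor:K3_birat} by the regular isomorphism of Theorem~\ref{Thrm:gen_birat}(ii), whose hypothesis $r>at$ is exactly what the stronger bounds $d_v>t_v+\frac{c}{a}$ and $d_w>t_w-\frac{c}{a}$ encode. Your rank computations and the identification of $t$ with $t_v$ are consistent with the paper's conventions, so the proposal matches the intended proof, only spelled out in more detail.
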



\section{New universal sheaves as Fourier-Mukai kernels}

\label{Sec:matrices}

In this section, we will present universal sheaves $\Pc_d, \Fc_d$ on the fiber square of an elliptic K3 surface $X$ with a section. The construction mimicks Atiyah's classification of vector bundles on elliptic curves. Further, we explicitly describe the action of the corresponding Fourier-Mukai functors $\Phi$ (with kernel $\Pc_d$ or $\Fc_d$) on the Chow groups:
$\ch \Phi : \Chow X \to \Chow X$. The author thanks Tony Pantev for suggesting the construction of $\Pc_d$ and $\Fc_d$ in private correspondence \cite{Tony}.

\subsection{Construction of new universal sheaves}
\label{Subsec:matrices:univ_shvs}

Let $\pi : X \to \PS^1$ be an elliptic K3 surface with a section $\sigma : \PS^1 \to X$ (that does not pass through singular points of fibers). The section class in $\Chow X$ is also denoted by $\sigma$, and let $f$ denote the class of a fiber.

Let $\Pi \deq X \times_{\PS^1} X$ -- it is a divisor in $X \times X$, and let $\iota : \Pi \to X \times X$ denote the embedding.
Let $\Delta \cong X$ be the diagonal, and let $\kappa : \Delta \to \Pi$ denote the diagonal embedding into the fibered product.

\begin{diagram}
\Delta \cong X & \rInto^{\,\,\,\,\,\kappa} & \Pi           &\\
&\ldTo(2,4)^{p_1}& \dInto_\iota & \rdTo(2,4)^{p_2} \\
&& X \times X    &\\
& \ldTo_{q_1} &  & \rdTo_{q_2}       \\
X &&  && X \\
& \rdTo_{\pi} && \ldTo_{\pi}       \\
&& \PS^1    &\\
\end{diagram}

Denote the natural projection $\Pi \to \PS^1$ by $\rho$:
$$ \rho = \pi q_1 \iota = 
\pi q_2 \iota =
\pi p_1 = \pi p_2:
\Pi \to \PS^1
.$$

Define the universal sheaf $\Pc_d$ on $\Pi$ classifying degree $d$ fiber sheaves on $X$, for a positive $d$:
$$
\Pc_d = \Oc_\Pi \Big(
    -\Delta + (d+1)p_1^*(\sigma)
    + p_2^* (\sigma)
    \Big) \otimes 
    \rho^* \Oc_{\PS^1}(2(d+1))
. $$

\begin{Lem}[\cite{Tony}]
    The sheaf $\Pc_d$ satisfies the following properties:
    \begin{enumerate}[label = (\roman*)]
        \item 
        If $t \in \PS^1$ and $x \in X_t = \pi^{-1}(t)$ a smooth point of the fiber over $t$, then
        $\Pc_{d |X_t \times x} \cong 
        \Oc_{X_t} ((d+1)\sigma(t) - x)$.
        \item 
        $\Pc_d$ is normalized, that is
        $\Pc_{d |\sigma \fprod{\PS^1} X} \cong \Oc_X$.
    \end{enumerate}
\end{Lem}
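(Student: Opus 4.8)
The plan is to read $\Pc_d$ as a tensor product of four line bundles on $\Pi$ --- $\Oc_\Pi(-\Delta)$, $\Oc_\Pi((d+1)p_1^*\sigma)$, $\Oc_\Pi(p_2^*\sigma)$, and $\rho^*\Oc_{\PS^1}(2(d+1))$ --- and to restrict each factor separately to the two loci in the statement, then multiply the results. (When $\pi$ has singular fibres, $\Pi$ acquires ordinary double points over the pairs of nodes and $\Delta$ is only $\Q$-Cartier there; but both loci below avoid these points, so the restrictions make sense as honest line-bundle restrictions.)

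For (i) I would first note that $X_t \times x$ is the fibre $p_2^{-1}(x)$, which $p_1$ carries isomorphically onto the fibre $X_t \subset X$. Under this identification $p_2$ and $\rho$ restrict to the constant maps (to $x$ and to $t$), so $p_2^*\Oc_X(\sigma)$ and $\rho^*\Oc_{\PS^1}(2(d+1))$ restrict trivially; $p_1$ restricts to the inclusion $X_t \hookrightarrow X$, so $p_1^*\Oc_X(\sigma)$ restricts to $\Oc_X(\sigma)|_{X_t} \cong \Oc_{X_t}(\sigma(t))$, using $\sigma\cdot f = 1$ and that $\sigma(t)$ is a smooth point of $X_t$. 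It then remains to see $\Oc_\Pi(\Delta)|_{X_t\times x} \cong \Oc_{X_t}(x)$: the only point of $\Delta \cap (X_t \times x)$ is $(x,x)$, which lies in the smooth locus of $\Pi$ since $x$ is a smooth point of $X_t$, and a local-coordinate check there --- with coordinates $(t,u,u')$ on $\Pi$ in which $\Delta = \{u=u'\}$ and $X_t\times x = \{t=t_0,\,u'=u_0\}$ --- shows the intersection is transverse of length one. Multiplying the four contributions gives $\Oc_{X_t}((d+1)\sigma(t)-x)$.

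For (ii) I would identify $S \deq \sigma \fprod{\PS^1} X$ with $X$ via $p_2$. Under $S\cong X$ the map $p_2$ is the identity and $\rho$ is $\pi$, so $p_2^*\Oc_X(\sigma)|_S = \Oc_X(\sigma)$ and $\rho^*\Oc_{\PS^1}(2(d+1))|_S = \pi^*\Oc_{\PS^1}(2(d+1))$; and $p_1|_S$ factors as $X\xrightarrow{\pi}\PS^1\xrightarrow{\sigma}X$, so $p_1^*\Oc_X(\sigma)|_S = \pi^*\big(\sigma^*\Oc_X(\sigma)\big) = \pi^*\Oc_{\PS^1}(\sigma^2)$. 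Since $X$ is K3 and $\sigma$ a smooth rational curve, adjunction gives $\sigma^2 = -2$, so $(d+1)p_1^*\sigma|_S = \pi^*\Oc_{\PS^1}(-2(d+1))$. Finally $\Delta\cap S$ is the diagonal copy of $\sigma$, which lies over the smooth locus of $\pi$ because the section avoids the singular points of fibres; hence $\Pi$ is smooth along it, the intersection is transverse, and $\Oc_\Pi(\Delta)|_S = \Oc_X(\sigma)$. Assembling, $\Pc_d|_S = \Oc_X(-\sigma)\otimes\pi^*\Oc_{\PS^1}(-2(d+1))\otimes\Oc_X(\sigma)\otimes\pi^*\Oc_{\PS^1}(2(d+1)) = \Oc_X$.

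The individual restrictions are routine; the step I expect to need the most care is the bookkeeping of the $\PS^1$-twists in (ii). The factor $\rho^*\Oc_{\PS^1}(2(d+1))$ in the definition of $\Pc_d$ is there precisely to absorb the twist $\pi^*\Oc_{\PS^1}(-2(d+1))$ forced by $\sigma^2 = -2$, so that sign must be tracked carefully; one also has to be sure that $\Delta$ is Cartier and meets the two loci transversally, which is exactly why we restrict to a smooth point $x$ in (i) and why the section is assumed to miss the singular points of fibres.
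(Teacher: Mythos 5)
Your proof is correct and follows essentially the same route as the paper's: decompose $\Pc_d$ into the four line-bundle factors and restrict each one separately, with your adjunction computation $\sigma^2=-2$ being equivalent to the paper's normal-bundle-sequence computation of $\sigma^*\Oc_X(\sigma)\cong\Oc_{\PS^1}(-2)$. The only difference is that the paper's written proof treats part (ii) alone, so your verification of part (i) (the transverse, length-one intersection of $\Delta$ with $X_t\times x$ and the observation that both restriction loci avoid the singular points of $\Pi$, where $\Delta$ fails to be Cartier) supplies details the paper leaves out.
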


\begin{proof}
Note that we can write 
\begin{equation*}
\begin{split}
\Pc_d &\cong \Oc_\Pi(-\Delta) \otimes \\
    &\otimes \iota^* \Oc((d+1) [\sigma \times X]) \\
    &\otimes \iota^* \Oc([X \times \sigma]) \\
    &\otimes \rho^* \Oc_{\PS^1}(2(d+1)) ,\\
\end{split}
\end{equation*}
And since restriction commutes with tensor product, we can calculate the restriction of each factor separately. Note that we are working with line bundles, so classical restriction coincides with derived restriction.

\begin{itemize}
    \item $\Oc_\Pi(\Delta)_{|\sigma \fprod{\PS^1} X}
    = \Oc_\Delta (\sigma \fprod{\PS^1} \sigma) 
    = \Oc_X(\sigma)$.
    \item $\Oc(\sigma \times X)_{|\sigma \fprod{\PS^1} X} =
    \Oc_{X\times X} (q_1^* [\sigma]) _{|\sigma \fprod{\PS^1} X}$.
    The composition
    $$\sigma \fprod{\PS^1} X \to X \times X \xrightarrow{q_1} X$$
    coincides with the composition of the projection on the first factor
    $s_1: \sigma \fprod{\PS^1} X$ with the section:
    $$ \sigma \fprod{\PS^1} X \xrightarrow{s_1} 
    \sigma \xrightarrow{\sigma} X
    ,$$
    therefore we can write the line bundle that we are interested in as the pullback along the second composition:
    $$\Oc(\sigma \times X)_{|\sigma \fprod{\PS^1} X} =
    s_1^* \sigma^* \Oc_X(\sigma)
    .$$
    The sheaf $\sigma^* \Oc_X(\sigma)$ is the normal sheaf of $\sigma \subset X$, and both schemes are smooth, thus $\sigma^* \Oc_X(\sigma)$ can be expressed as the quotient:
    $$  0 \to \Tc_\sigma \to
    \Tc_{X | \sigma} \to
    \sigma^* \Oc_X(\sigma) \to 0
    .$$
    Recalling that $\sigma \cong \PS^1$ and $X$ is a K3 surface, we get
    $\sigma^* \Oc_X(\sigma) = \det \Tc_{X | \sigma} \otimes \Tc_\sigma^\vee
    \cong \Oc_{\PS^1}(-2)$. Therefore:
    $$  \big( \iota^* \Oc(\sigma \times X)
    \big)_{|\sigma \fprod{\PS^1} X} \cong
    s_1^* \Oc_{\PS^1} (-2) \cong \Oc_X(-2f)
    .$$
    \item
    $\Oc_{X\times X}([X \times \sigma])_{
    |\sigma \fprod{\PS^1} X} =
    \Oc_\Delta ([\sigma \fprod{\PS^1} \sigma]) = \Oc_X(\sigma)$.
    \item
    $\rho^* \Oc_{\PS^1}(1) =
    \Oc_X(f)$.
\end{itemize}

Putting this together, we get the second part of the lemma:
\begin{equation*}
\begin{split}
\Pc_{d| \sigma \fprod{\PS^1} X}
&\cong \Oc_X(-\sigma) \otimes
\Oc_X((d+1) \cdot (-2f))\\
&\otimes \Oc_X(\sigma)
\otimes \Oc_X(2(d+1)f)
\cong \Oc_X.
\end{split}
\end{equation*}

\end{proof}

We now want to construct the universal sheaf $\Fc_d$ classifying rank $d+1$, degree $d$ stable fiber sheaves as a universal extension:
\begin{equation*}
    0 \to p_2^* \left(
    p_{2*} \Pc_d \otimes \omega_\pi \right)
    \to \Fc_d \to \Pc_d \to 0
\end{equation*}

For this, we will need several lemmas, and we start with observing that $p_{2*} \Pc_d$ is a sheaf and establishing
that there is a canonical extension class corresponding to $\Fc_d$. Recall that in our definition of the universal sheaf $\Pc_d$, we assumed that $d>0$.

\begin{Lem}[\cite{Tony}]
The derived pushforward $p_{2*} \Pc_d$ is a vector bundle of rank $d$ on $X$ (in particular, a complex concentrated in degree zero).
\end{Lem}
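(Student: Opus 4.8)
**Proof plan for: $p_{2*}\Pc_d$ is a vector bundle of rank $d$ on $X$.**

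The plan is to work fiberwise over $X$ and invoke cohomology and base change. Fix a point $x \in X$ lying over $t = \pi(x) \in \PS^1$; the fiber of $p_2 : \Pi \to X$ over $x$ is the elliptic curve $X_t = \pi^{-1}(t)$ (since $\Pi = X \times_{\PS^1} X$, the second projection has fiber $X_t$ over any $x$ in the fiber $X_t$). So the first task is to compute $\Hrm^\bullet(X_t, \Pc_{d\,|X_t \times x})$. When $x$ is a smooth point of $X_t$, the previous lemma identifies this restriction with $\Oc_{X_t}((d+1)\sigma(t) - x)$, a line bundle of degree $d > 0$ on the genus-one curve $X_t$; hence $\Hrm^1$ vanishes and $\Hrm^0$ has dimension $d$ by Riemann--Roch. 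First I would check that this degree-and-vanishing statement persists over the finitely many singular fibers as well: on a singular fiber of an elliptic surface (a curve of arithmetic genus one), a line bundle — more precisely the restriction of $\Pc_d$, which is still a line bundle on $\Pi$ restricted to the fiber — of total degree $d>0$ that is suitably positive on each component again has vanishing $\Hrm^1$ and $\Hrm^0$ of dimension $d = \chi$. The degree bookkeeping is exactly the point of the $\rho^*\Oc_{\PS^1}(2(d+1))$ twist, so I would verify on each component of a reducible/nonreduced fiber that the restricted line bundle has nonnegative degree and the total Euler characteristic is $d$; since $\pi$ has at worst the standard Kodaira fibers this is a finite check, and one can always absorb it by noting $\chi$ is locally constant in families and the generic value is $d$.

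With $\Hrm^1(X_t, \Pc_{d\,|X_t \times x}) = 0$ for every $x \in X$, cohomology and base change gives two things at once: the higher derived pushforwards $R^i p_{2*}\Pc_d$ vanish for $i > 0$ (so $p_{2*}\Pc_d$ is genuinely a sheaf, i.e. a complex concentrated in degree zero — here I am using properness of $p_2$, which holds because $\Pi \to X$ is proper), and $p_{2*}\Pc_d$ is locally free with fiber at $x$ canonically $\Hrm^0(X_t, \Pc_{d\,|X_t \times x})$, of constant dimension $d$. Concretely I would phrase this via Grothendieck's criterion: since $\dim_{k(x)} \Hrm^0$ is constant $= d$ and $\Hrm^1 \equiv 0$, the natural base-change maps are isomorphisms and $p_{2*}\Pc_d$ is a rank-$d$ vector bundle. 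I am free to use here the flatness of $\Pc_d$ over $X$ via $p_2$: $\Pc_d$ is a line bundle on $\Pi$, $\Pi \to X$ (via $p_2$) is flat with one-dimensional fibers because $\Pi = X \times_{\PS^1} X$ is obtained by base change from the flat morphism $\pi : X \to \PS^1$, so $\Pc_d$ is $X$-flat and the base-change machinery applies.

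The main obstacle is the behavior over singular fibers: the clean statement "degree $d$ line bundle on a genus-one curve has $h^1 = 0$" needs the right notion of positivity on reducible fibers, since a line bundle of total degree $d$ can still have $h^1 \neq 0$ if it is very negative on some component. I expect this is handled either by a direct Kodaira-fiber-by-Kodaira-fiber inspection (using that $\sigma$ avoids singular points, so $(d+1)p_1^*\sigma + p_2^*\sigma$ meets each fiber in a controlled way on the component hit by the section), or more slickly by semicontinuity: $\chi$ is constant $=d$ in the flat family, $h^1$ is upper semicontinuous and vanishes generically, hence vanishes everywhere once one rules out jumps — and any jump would force $h^0$ to jump too, which one excludes by the same positivity input on the section component. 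Everything else — properness, flatness, the citation of cohomology and base change, Riemann--Roch on the smooth fibers — is routine, so I would spend the bulk of the writeup pinning down the singular-fiber degree estimate and then invoke base change to conclude local freeness of rank $d$.
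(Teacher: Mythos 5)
Your proposal is correct and follows essentially the same route as the paper: identify the fiber of $p_2$ over $x$ with the elliptic fiber $X_t$, note the restriction of $\Pc_d$ is a (degree $d$, $d>0$) line bundle there so $\Hrm^1$ vanishes and $h^0=d$, and conclude by cohomology and base change that $p_{2*}\Pc_d$ is locally free of rank $d$. The only difference is that you devote substantial care to the reducible Kodaira fibers (component-by-component degree bookkeeping), a point the paper's proof passes over with the parenthetical ``elliptic curve (possibly singular)''; your extra check is sound and, if anything, fills in a detail the paper leaves implicit.
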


\begin{proof}
By cohomology and base change, restriction of $p_{2*} \Pc_d$ to a point $x \in X$ is isomorphic to the cohomology of the restriction $\Pc_{d|X\fprod{\PS^1}x}$.
The latter is a line bundle of positive degree $d$ on an elliptic curve (possibly singular), hence its cohomology is concentrated in degree zero and has dimension $$\dim \Hrm^0 \left(X\fprod{\PS^1}x, \Pc_{d|X\fprod{\PS^1}x} \right) 
= \deg \Pc_{d|X\fprod{\PS^1}x} = d.$$
So $p_{2*} \Pc_d$ is a sheaf of constant rank $d$.
\end{proof}

\begin{Lem}[\cite{Tony}]
We have a canonical isomorphism:
$$\Ext^1_\Pi (\Pc_d , p_2^* \left(
p_{2*} \Pc_d \otimes \omega_\pi \right)) =
\Hom{X}{p_{2*}\Pc_d, p_{2*}\Pc_d}
.$$
\end{Lem}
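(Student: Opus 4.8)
The plan is to compute the global $\Ext^1$ by pushing forward along $p_2 : \Pi \to X$ and then invoking relative Serre duality, the twist by $\omega_\pi$ on the target being exactly what is needed to make the dualizing twists cancel. Write $\Gc \deq p_{2*}\Pc_d$, which by the previous lemma is a rank $d$ vector bundle on $X$; the goal is a canonical isomorphism $\Ext^1_\Pi(\Pc_d, p_2^*(\Gc\otimes\omega_\pi)) \cong \Hom{X}{\Gc,\Gc}$. Since $\Pc_d$ is a line bundle, $\RHom{\Pi}{\Pc_d, p_2^*(\Gc\otimes\omega_\pi)} = \RGamma(\Pi, \Pc_d^\vee \otimes p_2^*(\Gc\otimes\omega_\pi))$, and as $p_2$ is proper the Leray spectral sequence and the projection formula rewrite this as $\RGamma(X, p_{2*}(\Pc_d^\vee)\otimes\Gc\otimes\omega_\pi)$, where $p_{2*}$ is derived. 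The first real step is to pin down $p_{2*}(\Pc_d^\vee)$: restricting $\Pc_d^\vee$ to a fiber $X_t\fprod{\PS^1}\{x\}$ of $p_2$ gives, by the previous lemma, the line bundle $\Oc_{X_t}(x-(d+1)\sigma(t))$ of degree $-d$ on the (possibly singular) genus-one curve $X_t$; for $d>0$ it has no sections and, by Riemann--Roch, $\Hrm^1$ of constant dimension $d$. Hence cohomology and base change give $\Rab p_{2*}\Pc_d^\vee = 0$ and $R^1 p_{2*}\Pc_d^\vee$ locally free of rank $d$, so $p_{2*}\Pc_d^\vee = (R^1 p_{2*}\Pc_d^\vee)[-1]$ and therefore $\Ext^1_\Pi(\Pc_d, p_2^*(\Gc\otimes\omega_\pi)) = \Hrm^0\big(X, R^1 p_{2*}(\Pc_d^\vee)\otimes\Gc\otimes\omega_\pi\big)$.

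The second step is relative Serre duality for $p_2$, which is a proper flat morphism of relative dimension one with Gorenstein fibers (plane cubics), hence has a relative dualizing sheaf $\omega_{p_2}$. Grothendieck duality gives $p_{2*}(\Pc_d\otimes\omega_{p_2})[1] = \RHom{X}{p_{2*}\Pc_d^\vee, \Oc_X}$; combined with the previous paragraph this yields $R^1 p_{2*}\Pc_d^\vee \cong \big(p_{2*}(\Pc_d\otimes\omega_{p_2})\big)^\vee$, with the right-hand side a vector bundle in degree zero. It then remains to identify $\omega_{p_2}$: because $\Pi = X\fprod{\PS^1}X$ and $p_2$ is the base change of $\pi$ along $\pi$, the relative dualizing sheaf is compatible with base change, $\omega_{p_2} = p_1^*\omega_\pi$; and since $X$ is a K3 surface, $\omega_X = \Oc_X$ forces $\omega_\pi = \pi^*\Oc_{\PS^1}(2)$, whence $\omega_{p_2} = \rho^*\Oc_{\PS^1}(2) = p_2^*\pi^*\Oc_{\PS^1}(2) = p_2^*\omega_\pi$. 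One more application of the projection formula gives $p_{2*}(\Pc_d\otimes p_2^*\omega_\pi) = \Gc\otimes\omega_\pi$, so $R^1 p_{2*}\Pc_d^\vee \cong \Gc^\vee\otimes\omega_\pi^{-1}$.

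Substituting back, $\Ext^1_\Pi(\Pc_d, p_2^*(\Gc\otimes\omega_\pi)) = \Hrm^0(X, \Gc^\vee\otimes\omega_\pi^{-1}\otimes\Gc\otimes\omega_\pi) = \Hrm^0(X, \Gc^\vee\otimes\Gc) = \Hom{X}{\Gc,\Gc}$, the twists by $\omega_\pi$ cancelling, which is the asserted identification. All the isomorphisms used — the projection formula, cohomology and base change, and Grothendieck--Serre duality — are natural in the relevant arguments, which delivers the canonicity claimed in the statement; under this isomorphism the class $\mathrm{id}_{\Gc}$ corresponds to the extension defining $\Fc_d$, which is the relative analogue of Atiyah's universal extension on a single elliptic curve \cite{At}. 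I expect the main point requiring care to be the identification of the relative dualizing sheaf $\omega_{p_2}$ with $p_2^*\omega_\pi$ (this is precisely where the K3 hypothesis enters, via $\omega_X = \Oc_X$) together with checking that the hypotheses of cohomology and base change hold, so that $R^1 p_{2*}\Pc_d^\vee$ is genuinely a vector bundle concentrated in a single degree; everything else is formal manipulation with derived functors.
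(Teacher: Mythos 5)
Your proof is correct and takes essentially the same route as the paper's: rewrite $\RHom{\Pi}{\Pc_d, p_2^*(p_{2*}\Pc_d\otimes\omega_\pi)}$ via the projection formula along $p_2$ and apply relative Grothendieck--Serre duality for $p_2$, using the identification $\omega_{p_2}\cong p_2^*\omega_\pi$. The only (welcome) difference is that you make the degree bookkeeping explicit -- showing via cohomology and base change that $p_{2*}(\Pc_d^\vee)$ is concentrated in degree one -- which accounts for the shift converting $\Ext^1_\Pi$ into $\Hom{X}{p_{2*}\Pc_d,p_{2*}\Pc_d}$, a point the paper's chain of isomorphisms leaves implicit.
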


\begin{proof}
If $\omega_{p_2}$ and $\omega_\pi$ denote the relative dualizing sheaves for morphisms $p_2$ and $\pi$, respectively, then we note that $\omega_{p_2} \cong p_2^* \omega_\pi$, and then by relative duality and projection formula we have:
$$ p_{2*} (\Pc_d^\vee) = (p_{2*} (\Pc_d \otimes \omega_{p_2}))^\vee \cong
(p_{2*}\Pc_d \otimes \omega_\pi)^\vee
.$$
We can apply this to get the following sequence of natural isomorphisms:
\begin{equation*}
\begin{split}
\RHom{\Pi}{\Pc_d , p_2^* (p_{2*} \Pc_d \otimes \omega_\pi)} &=
\RGamma_\Pi (\Pc_d^\vee \otimes
p_2^* ( p_{2*} \Pc_d \otimes \omega_\pi ) ) =\\
&= \RGamma_X \circ p_{2*}
( \Pc_d^\vee \otimes
p_2^* ( p_{2*} \Pc_d \otimes \omega_\pi ) ) =\\
&= \RGamma_X (
p_{2*} (\Pc_d^\vee) \otimes
(p_{2*} \Pc_d \otimes \omega_\pi) ) =\\
&= \RGamma_X (
(p_{2*} \Pc_d \otimes \omega_\pi)^\vee
\otimes
(p_{2*} \Pc_d \otimes \omega_\pi) ) =\\
&= \RHom{X}{p_{2*} \Pc_d \otimes \omega_\pi, p_{2*} \Pc_d \otimes \omega_\pi}
.
\end{split}
\end{equation*}
\end{proof}

\subsection{Action on Chow ring}
\label{Subsec:matrices:action}

Now we want to compute the action of the Fourier-Mukai functor with kernel $\Fc_d$ on cohomology. For that, we start with computing the action of another functor with kernel $\Pc_d$.

We use the notation from the previous section.

First note that for any $E \in \Db X$, we have a canonical isomorphism:
$$p_{1*} \left( \Pc_d \otimes p_2^* E
\right) =
q_{1*} \left( \iota_*\Pc_d \otimes q_2^* E
\right)
.$$

Now, by projection formula:
\begin{equation*}
\begin{split}
& \iota_* \Pc_d = (\iota_* \Oc_\Pi(-\Delta))
    \otimes q_1^* \Lc_1 \otimes q_2^* \Lc_2
    \mbox{, where:}\\
& \Lc_1 = \Oc_X( (d+1) \sigma + 2(d+1)f ),\\
& \Lc_2 = \Oc_X(\sigma).
\end{split}    
\end{equation*}

\begin{Lem}
Let $E \in \Db X$. Then the cohomology class 
$\ch (\FM^{\iota_* \Pc_d} (E))$ is equal to:
$$ \ch \left[ \FM^{\iota_* \Pc_d} (E) \right] =
\ch [\Lc_1] \otimes
\ch \Big(
\pi^* \pi_* [E(\sigma)] - [E(\sigma)]
\Big)
.$$
\end{Lem}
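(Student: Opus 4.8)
The plan is to compute the Chern character of $\FM^{\iota_*\Pc_d}(E) = q_{1*}(\iota_*\Pc_d \otimes q_2^* E)$ by plugging the projection-formula expression for $\iota_*\Pc_d$ into the Fourier–Mukai formula and peeling off factors one at a time. First I would substitute $\iota_*\Pc_d = (\iota_*\Oc_\Pi(-\Delta)) \otimes q_1^*\Lc_1 \otimes q_2^*\Lc_2$, so that
\begin{equation*}
\FM^{\iota_*\Pc_d}(E) = q_{1*}\big( (\iota_*\Oc_\Pi(-\Delta)) \otimes q_1^*\Lc_1 \otimes q_2^*(\Lc_2 \otimes E) \big).
\end{equation*}
By the projection formula for $q_1$, the factor $q_1^*\Lc_1$ comes out as a tensor with $\Lc_1$, contributing $\ch[\Lc_1]$ to the final answer. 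It then remains to identify $q_{1*}\big( \iota_*\Oc_\Pi(-\Delta) \otimes q_2^*(E(\sigma)) \big)$ with $\pi^*\pi_*[E(\sigma)] - [E(\sigma)]$ at the level of Chern characters (or better, in $\K X$).

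The key step is the structure sequence of the diagonal divisor $\Delta \subset \Pi$: tensoring $0 \to \Oc_\Pi(-\Delta) \to \Oc_\Pi \to \kappa_*\Oc_\Delta \to 0$ with $q_2^*(E(\sigma))$ (all line-bundle twists, so classical = derived restriction, no Tor issues on the divisor $\Pi$) and pushing forward along $q_1$ gives, in K-theory,
\begin{equation*}
q_{1*}\big(\iota_*\Oc_\Pi(-\Delta) \otimes q_2^*(E(\sigma))\big) = q_{1*}\big(\iota_*\Oc_\Pi \otimes q_2^*(E(\sigma))\big) - q_{1*}\big(\kappa_*\Oc_\Delta \otimes q_2^*(E(\sigma))\big).
\end{equation*}
For the first term, $q_{1*}\iota_* = (\pi q_2\iota)^* \circ (\text{something})$ — more precisely, since $\Pi = X \times_{\PS^1} X$ is the fibered product, base change along $\pi$ identifies $q_{1*}\big(\iota_*(q_2\iota)^*(-)\big)$ with $\pi^*\pi_*(-)$; applied to $E(\sigma)$ this yields $\pi^*\pi_*[E(\sigma)]$. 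For the second term, $\kappa: \Delta \cong X \hookrightarrow \Pi$ composed with $q_1$ is the identity on $X$, and $q_2\iota\kappa = \mathrm{id}_X$ as well, so $q_{1*}(\kappa_*\Oc_\Delta \otimes q_2^*(E(\sigma))) = E(\sigma)$. Combining gives exactly $\pi^*\pi_*[E(\sigma)] - [E(\sigma)]$, and tensoring back by $\Lc_1$ finishes the proof after applying $\ch$.

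The main obstacle I anticipate is the base-change identification $q_{1*}\iota_*(q_2\iota)^* = \pi^*\pi_*$: one must check that the relevant square (the two projections $\Pi \to X$ over $\PS^1$) is Tor-independent so that derived flat base change applies, which is automatic here because $\pi$ is flat (an elliptic fibration over a curve) and $\Pi$ is its fibered product. A secondary point of care is bookkeeping the twist $\Lc_2 = \Oc_X(\sigma)$: it must be absorbed into $E$ as $E(\sigma)$ before the base-change step, which is why the statement features $E(\sigma)$ rather than $E$; keeping track of whether $\Lc_2$ pulls back via $q_2$ (it does, since it sits on the second factor) versus $q_1$ is the only place a sign or index error could creep in. Everything else is a routine application of the projection formula and the functoriality of $\ch$ through $\K X$.
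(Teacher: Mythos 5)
Your proposal is correct and is exactly the natural completion of the setup the paper gives right before this lemma (the identity $p_{1*}(\Pc_d \otimes p_2^* E) = q_{1*}(\iota_* \Pc_d \otimes q_2^* E)$ and the decomposition $\iota_*\Pc_d = \iota_*\Oc_\Pi(-\Delta) \otimes q_1^*\Lc_1 \otimes q_2^*\Lc_2$), which the paper itself leaves unproved. The three ingredients you use -- the projection formula to extract $\Lc_1$ and absorb $\Lc_2$ into $E(\sigma)$, the structure sequence of $\Delta \subset \Pi$ made additive in $\K X$, and flat base change over $\PS^1$ giving $p_{1*}p_2^* = \pi^*\pi_*$ together with $q_{1*}\big((\iota\kappa)_*\Oc_\Delta \otimes q_2^*(E(\sigma))\big) = E(\sigma)$ -- are all valid and yield the stated formula after applying $\ch$.
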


\begin{Lem}
Let $E \in \Db X$. 
The matrix of $[\Lc_1] \otimes -$:
$$ \begin{bmatrix}
 1&0&0 \\
 (d+1)(\hat \sigma + 2 \hat f) & 1 & 0 \\
 (d+1)^2 & (d+1)(\hat \sigma + 2 \hat f) & 1
\end{bmatrix}
$$
$$ \begin{bmatrix}
 1 & 0 & 0 & 0 \\
 d+1 & 1 & 0 & 0 \\
 2(d+1) & 0 & 1 & 0 \\
 (d+1)^2 & 0 & d+1 & 1
\end{bmatrix}
$$
\end{Lem}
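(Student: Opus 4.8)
The statement is a bookkeeping computation with the Chern character of the line bundle $\Lc_1 = \Oc_X\big((d+1)\sigma + 2(d+1)f\big)$: tensoring by $\Lc_1$ on the Chow ring is nothing but multiplication by $e^{D}$, where $D \deq (d+1)\sigma + 2(d+1)f$. So the plan is to first compute $\ch\Lc_1$ explicitly using the intersection form of the elliptic K3 surface $X$ with section, and then simply read off the matrix of ``multiply by $e^D$'' in the two gradings/bases that appear in the two displays.

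First I would record the intersection numbers on $X$. Since $K_X = 0$ and $\sigma \cong \PS^1$, adjunction gives $\sigma^2 = -2$; the section meets every fiber once, so $\sigma.f = 1$; and $f^2 = 0$. Hence $D^2 = (d+1)^2(\sigma^2 + 4\,\sigma.f + 4 f^2) = 2(d+1)^2$, so that
$$\ch \Lc_1 = 1 + D + \tfrac12 D^2 = 1 + (d+1)\sigma + 2(d+1)f + (d+1)^2\,[\mathrm{pt}],$$
with all higher Chern-character terms vanishing for dimension reasons on a surface.

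For the first (graded) matrix I would split an arbitrary class into components $(\chn_0, \chn_1, \chn_2)$ and observe that multiplication by $e^{D}$ fixes $\chn_0$, sends $\chn_1 \mapsto \chn_1 + \chn_0\,D$, and sends $\chn_2 \mapsto \chn_2 + \chn_1.D + \chn_0\,\tfrac12 D^2$; substituting $D = (d+1)(\hat\sigma + 2\hat f)$ and $\tfrac12 D^2 = (d+1)^2$ reproduces the displayed $3\times 3$ matrix, acting on column vectors. For the second matrix I would instead compute the image under multiplication by $e^{D}$ of each element of the basis $\{1,\sigma,f,[\mathrm{pt}]\}$ of the rank-four sublattice of $\Chow X$ spanned by these classes: one gets $e^{D}\cdot 1 = 1 + (d+1)\sigma + 2(d+1)f + (d+1)^2[\mathrm{pt}]$; $e^{D}\cdot\sigma = \sigma$, since its point-class coefficient is $(d+1)\sigma^2 + 2(d+1)\sigma.f = -2(d+1)+2(d+1)=0$; $e^{D}\cdot f = f + (d+1)[\mathrm{pt}]$; and $e^{D}\cdot[\mathrm{pt}] = [\mathrm{pt}]$. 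Assembling these four images as columns yields the $4\times 4$ matrix.

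There is no real obstacle here; the computation is routine. The only points that deserve care are getting the intersection numbers on the elliptic K3 with section right --- in particular $\sigma^2=-2$, which is precisely what makes the $\sigma$-column of the second matrix collapse to $(0,1,0,0)^{T}$ --- noting that $\Chow X$ may be strictly larger than this rank-four sublattice (so the $4\times 4$ form records the action only on the classes relevant to $\Pc_d$ and $\Fc_d$), and fixing once and for all that the matrices act on column vectors so that the two displays are consistent with each other.
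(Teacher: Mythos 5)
Your computation is correct: with $\sigma^2=-2$, $\sigma.f=1$, $f^2=0$ one gets $\ch\Lc_1 = 1+(d+1)(\sigma+2f)+(d+1)^2[\mathrm{pt}]$, and multiplying through in the graded pieces and in the basis $\{1,\sigma,f,[\mathrm{pt}]\}$ reproduces both displayed matrices (the $\sigma$-column collapsing exactly because $(d+1)(\sigma^2+2\sigma.f)=0$). The paper states this lemma without proof, and your routine Chern-character verification is precisely the intended argument, so there is nothing to add.
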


\begin{Lem}
Let $E \in \Db X$. 
The matrix of $[\Oc_X(\sigma)] \otimes -$:
$$ \begin{bmatrix}
 1&0&0 \\
 \hat \sigma & 1 & 0 \\
 -1 & \hat \sigma & 1
\end{bmatrix}
$$
$$ \begin{bmatrix}
 1 & 0 & 0 & 0 \\
 1 & 1 & 0 & 0 \\
 0 & 0 & 1 & 0 \\
 -1& -2& 1 & 1
\end{bmatrix}
$$
\end{Lem}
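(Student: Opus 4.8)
The plan is to treat this lemma exactly as the previous two: tensoring a sheaf by a line bundle $L$ multiplies its Chern character by $\ch L$, so the whole statement reduces to expanding $\ch \Oc_X(\sigma) = e^{\sigma} = 1 + \sigma + \frac{1}{2}\sigma^2$ and multiplying it against a basis of the relevant part of $\Chow X$. First I would record the intersection theory on the elliptic K3 surface $X$ with its section: since $\sigma \cong \PS^1$ lies on a K3 surface, adjunction gives $\sigma^2 + \sigma.K_X = -2$ with $K_X = 0$, hence $\sigma^2 = -2$ (i.e. $-2[\mathrm{pt}]$); by the choice of $\sigma$ and $f$ one has $\sigma.f = [\mathrm{pt}]$ and $f^2 = 0$, while $\sigma.[\mathrm{pt}] = f.[\mathrm{pt}] = 0$ for dimension reasons. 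In particular $\frac{1}{2}\sigma^2 = -[\mathrm{pt}]$, so $\ch \Oc_X(\sigma) = 1 + \sigma - [\mathrm{pt}]$.

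Next I would pin down the two bases underlying the displayed matrices. The $4 \times 4$ matrix is the matrix of the endomorphism $\alpha \mapsto \ch \Oc_X(\sigma).\alpha$ of the rank-four sublattice $\langle 1, \sigma, f, [\mathrm{pt}] \rangle \subset \Chow X$ written in that ordered basis; this sublattice is preserved because $\ch \Oc_X(\sigma)$ involves only $1$, $\sigma$, and the degree-two classes $\sigma^2, \sigma.f \in \Z[\mathrm{pt}]$. The $3 \times 3$ matrix is the ``graded'' form of the same operator, in which the degree-one part of $\Chow X$ is kept as one formal slot instead of being expanded in $\{\sigma, f\}$: the symbol $\hat\sigma$ appearing in the middle row and column stands for the operator ``multiply by $\sigma$'', regarded as a map $\Chown^0 X \to \Chown^1 X$ (respectively $\Chown^1 X \to \Chown^2 X$), and the entry $-1$ in position $(3,1)$ is the coefficient of $[\mathrm{pt}]$ in $\frac{1}{2}\sigma^2$.

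Finally I would just do the multiplication: $1 \mapsto 1 + \sigma - [\mathrm{pt}]$, $\sigma \mapsto \sigma + \sigma^2 = \sigma - 2[\mathrm{pt}]$, $f \mapsto f + \sigma.f = f + [\mathrm{pt}]$, and $[\mathrm{pt}] \mapsto [\mathrm{pt}]$. Reading off these four images as columns in the basis $(1, \sigma, f, [\mathrm{pt}])$ gives the displayed $4 \times 4$ matrix, and collapsing the two degree-one coordinates into the formal symbol $\hat\sigma$ gives the $3 \times 3$ matrix. I do not expect any real obstacle: the computation is entirely routine once one has $\sigma^2 = -2$, and the only things that need care are that adjunction input and keeping the basis ordering ($\sigma$ before $f$, degrees increasing) consistent with the two previous lemmas so that each entry falls in the right slot.
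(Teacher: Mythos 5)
Your computation is correct and is exactly the routine verification the paper leaves implicit (the lemma is stated without proof): with $\sigma^2=-2$ by adjunction on the K3, $\sigma.f=1$, $f^2=0$, one gets $\ch\Oc_X(\sigma)=1+\sigma-[\mathrm{pt}]$, and multiplying the ordered basis $(1,\sigma,f,[\mathrm{pt}])$ reproduces the displayed $4\times 4$ matrix column by column, with the $3\times 3$ matrix being its graded form where $\hat\sigma$ denotes multiplication/intersection with $\sigma$. Your basis ordering and column convention are consistent with the preceding lemma for $\Lc_1$, so nothing further is needed.
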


\begin{Lem}
Let $E \in \Db X$. 
The matrix of $[\pi^* \pi_* (-)]$:
$$ \begin{bmatrix}
 0      & \fdeg         & 0 \\
 2\hat f& -\hat f \circ \fdeg
        & \hat f \circ \deg \\
0       & 0 & 0
\end{bmatrix}
$$
$$ \begin{bmatrix}
 0 & 1 & 0 & 0 \\
 0 & 0 & 0 & 0 \\
 2 & -1& 0 & 1 \\
 0 & 0 & 0 & 0 \\
\end{bmatrix}
$$
\end{Lem}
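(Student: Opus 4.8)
The two displayed matrices encode one and the same $\Z$-linear endomorphism of $\Chow X$, namely the map sending $\ch E$ to $\ch(\pi^*\pi_* E)$ for $E \in \Db X$, and the plan is to compute this endomorphism head-on by Grothendieck--Riemann--Roch. For bookkeeping I fix the classes $1 \in \Chown^0 X$, the divisor classes $\hat\sigma := \sigma$ and $\hat f := f$ spanning the relevant part of $\Chown^1 X$ (with intersection numbers $\sigma.f = 1$ and $f^2 = 0$), and the point class $[\mathrm{pt}] \in \Chown^2 X$. The $4\times 4$ matrix is written in the ordered basis $(\chn_0, \text{coefficient of }\hat\sigma, \text{coefficient of }\hat f, \chn_2)$, whereas the $3\times 3$ matrix keeps the graded pieces $\Chown^0 X$, $\Chown^1 X$, $\Chown^2 X$ as single slots, so that its entries are read as the evident operators $\fdeg : \Chown^1 X \to \Chown^0 X$, $\deg : \Chown^2 X \to \Chown^0 X$, and multiplication by $\hat f$.

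Next I would apply GRR to the projection $\pi : X \to \PS^1$, of relative dimension one: for $E \in \Db X$,
$$ \ch(\pi_* E) \;=\; \pi_*\!\bigl(\ch(E)\,\td(T_X)\bigr)\cdot\td(T_{\PS^1})^{-1}. $$
Since $X$ is a K3 surface, $\cn_1(X) = 0$ and $\chi(\Oc_X) = 2$, so $\td(T_X) = 1 + 2[\mathrm{pt}]$; also $\td(T_{\PS^1})^{-1} = 1 - [\mathrm{pt}_{\PS^1}]$. Writing $\ch(E) = \rk E + \cn_1 E + \chn_2 E$ and multiplying out, the class $\ch(E)\,\td(T_X)$ has graded components $\rk E$ in codimension $0$, $\cn_1 E$ in codimension $1$, and $\chn_2 E + 2\,\rk E\,[\mathrm{pt}]$ in codimension $2$. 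Pushing forward along $\pi$ — which lowers Chow codimension by one — I use that $\pi_*$ annihilates $\Chown^0 X$, that $\pi_*(D) = (D.f)\,[\PS^1]$ for a divisor $D$ by the projection formula against $f = \pi^*[\mathrm{pt}_{\PS^1}]$, and that $\pi_*[\mathrm{pt}_X] = [\mathrm{pt}_{\PS^1}]$. This yields
$$ \ch(\pi_* E) \;=\; \fdeg E\cdot[\PS^1] \;+\; \bigl(\chn_2 E + 2\,\rk E - \fdeg E\bigr)\,[\mathrm{pt}_{\PS^1}], $$
where the subtracted $\fdeg E$ is contributed precisely by the factor $1 - [\mathrm{pt}_{\PS^1}]$.

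Finally I would pull back along $\pi$, using $\pi^*[\PS^1] = 1$ and $\pi^*[\mathrm{pt}_{\PS^1}] = \hat f$, to get
$$ \ch(\pi^*\pi_* E) \;=\; \fdeg E\cdot 1 \;+\; \bigl(2\,\rk E - \fdeg E + \chn_2 E\bigr)\,\hat f \;\in\; \Chown^0 X \oplus \Chown^1 X, $$
with vanishing $\hat\sigma$-component and vanishing codimension-$2$ component. Expressing this in the two coordinate systems reproduces exactly the stated matrices: in the first, the image has $\chn_0 = \fdeg E$ (row $(0,1,0,0)$), trivial $\hat\sigma$- and $\chn_2$-parts (the two zero rows), and $\hat f$-coefficient $2\,\rk E - \fdeg E + \chn_2 E$ (row $(2,-1,0,1)$); in the graded presentation the only nonzero entries are $\fdeg$ in position $(1,2)$ and, in the middle row, $2\hat f$ on $\Chown^0 X$, $-\hat f\circ\fdeg$ on $\Chown^1 X$, $\hat f\circ\deg$ on $\Chown^2 X$. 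The one place demanding care is the degree bookkeeping for $\pi_*$: because it shifts the grading, one must keep straight which graded piece of $\ch(E)\td(T_X)$ lands where and not conflate $\fdeg$ (the fibre degree of a divisor) with $\deg$ (the degree of a $0$-cycle); once that is handled, the rest is a routine computation relying only on the K3 value of $\td(T_X)$ and the intersection numbers among $\sigma$, $f$, and $[\mathrm{pt}]$.
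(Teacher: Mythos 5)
Your computation is correct: applying GRR to $\pi : X \to \PS^1$ with $\td(T_X) = 1 + 2[\mathrm{pt}]$ and $\td(T_{\PS^1})^{-1} = 1 - [\mathrm{pt}_{\PS^1}]$, then pushing forward ($\pi_*$ kills codimension $0$, sends a divisor $D$ to $(D.f)[\PS^1]$ and a point to $[\mathrm{pt}_{\PS^1}]$) and pulling back ($\pi^*[\mathrm{pt}_{\PS^1}] = \hat f$) gives exactly $\ch(\pi^*\pi_* E) = \fdeg E + (2\rk E - \fdeg E + \deg\chn_2 E)\hat f$, which reproduces both the graded $3\times 3$ operator matrix and the $4\times 4$ matrix in the basis $(1,\hat\sigma,\hat f,[\mathrm{pt}])$. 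The paper states this lemma without proof, and your GRR argument is the evident intended one, so there is nothing to flag.
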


\begin{Lem}
Let $E \in \Db X$. 
The matrix of $[\pi^* \pi_* (-(\sigma))]$:
$$ \begin{bmatrix}
 1  & \fdeg         & 0 \\
 0  & \hat f \circ \deg \circ (\hat \sigma - \hat f)
    & \hat f \circ \deg \\
0   & 0 & 0
\end{bmatrix}
$$
$$ \begin{bmatrix}
 1 & 1 & 0 & 0 \\
 0 & 0 & 0 & 0 \\
 0 & -3& 1 & 1 \\
 0 & 0 & 0 & 0 \\
\end{bmatrix}
$$
\end{Lem}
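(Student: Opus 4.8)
The plan is to realize the operation $E \mapsto \pi^{*}\pi_{*}(E(\sigma))$ as a composition of two operations whose matrices are already available, and then simply multiply. Indeed, by definition this functor is the derived twist $(-)\otimes\Oc_{X}(\sigma)$ followed by the derived functor $\pi^{*}\pi_{*}(-)$. Since $\ch\colon\K X\to\Chow X$ is additive and, by Grothendieck--Riemann--Roch together with naturality of $\ch$ for pullbacks, carries such a composition to the composition of the induced $\Z$-linear endomorphisms of $\Chow X$, the matrix of $[\pi^{*}\pi_{*}(-(\sigma))]$ is the product of the matrix of $[\pi^{*}\pi_{*}(-)]$ with the matrix of $[\Oc_{X}(\sigma)]\otimes(-)$ --- the two preceding lemmas --- taken in that order (first the twist, then push-and-pull). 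So the proof reduces to carrying out this product, once in the $3\times 3$ ``symbolic'' presentation --- in which $\chn_{1}$ is kept as a class in $\Chown^{1} X$, so the matrix entries are operators built from $\hat\sigma$, $\hat f$, $\fdeg$ and $\deg$ --- and once in the $4\times 4$ numerical presentation in the ordered basis $1,\hat\sigma,\hat f,[\mathrm{pt}_{X}]$ of the subring of $\Chow X$ spanned by $\sigma$ and $f$. The only facts needed to simplify the resulting entries are the intersection numbers $\sigma\cdot f=1$, $f\cdot f=0$ and $\sigma\cdot\sigma=-2$ (adjunction on the K3 surface, already used earlier in this section), and after simplification the products collapse to exactly the two displayed matrices.

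It is worth recording a self-contained cross-check via Grothendieck--Riemann--Roch, which could also serve as the primary argument. One has $\ch\pi_{*}(E(\sigma))=\pi_{*}\big(\ch(E(\sigma))\cdot\td(T_{\pi})\big)$, where $\td(T_{\pi})=1-f+2[\mathrm{pt}_{X}]$; this comes from $\cn_{1}(T_{X})=0$, $\cn_{2}(T_{X})=24$ and $\cn_{1}(T_{\PS^{1}})=2[\mathrm{pt}]$, so that $\cn_{1}(T_{\pi})=-2f$ and $\td_{2}(T_{\pi})=\tfrac{1}{12}\big(\cn_{1}^{2}+\cn_{2}\big)(T_{\pi})=2[\mathrm{pt}_{X}]$. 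One then applies $\pi^{*}$, using $\pi_{*}1=0$, $\pi_{*}f=0$, $\pi_{*}\sigma=[\PS^{1}]$, $\pi_{*}[\mathrm{pt}_{X}]=[\mathrm{pt}_{\PS^{1}}]$ and $\pi^{*}[\mathrm{pt}_{\PS^{1}}]=f$. Expanding $\ch(E(\sigma))=\ch(E)\cdot e^{\sigma}$ with $e^{\sigma}=1+\hat\sigma-[\mathrm{pt}_{X}]$ (again by $\sigma\cdot\sigma=-2$) and collecting terms gives, for $\pi^{*}\pi_{*}(E(\sigma))$, the values $\chn_{0}=\rk E+\fdeg E$, $\chn_{1}=\big(\chn_{2} E+\cn_{1} E\cdot\sigma-\fdeg E\big)\,f$ and $\chn_{2}=0$, which is precisely what the two matrices encode.

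The computation is routine; the only delicate points are bookkeeping ones. One must keep the order of the matrix product straight (the $[\pi^{*}\pi_{*}(-)]$ matrix on the left), interpret each operator entry on the graded piece of $\Chow X$ it genuinely acts on, and --- the one place an error is easy --- make sure that the $[\pi^{*}\pi_{*}(-)]$ matrix being fed into the product already carries the term $\td_{2}(T_{\pi})=2[\mathrm{pt}_{X}]$, where it shows up as the $2\hat f$ entry acting on the rank. It is exactly this term which, after composing with the $-[\mathrm{pt}_{X}]$ and $\hat\sigma$ entries coming from $e^{\sigma}$, cancels so that the first column of the second row of the answer vanishes.
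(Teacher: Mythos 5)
Your proposal is correct: the paper states this lemma without proof, and your argument (multiplying the previously computed matrix of $[\pi^*\pi_*(-)]$ on the left by that of $[\Oc_X(\sigma)]\otimes(-)$, cross-checked by direct Grothendieck--Riemann--Roch with $\td(T_\pi)=1-f+2[\mathrm{pt}_X]$ and $\sigma^2=-2$, $\sigma\cdot f=1$, $f^2=0$) is exactly the computation the paper implicitly intends. The product does reproduce both displayed matrices, including the cancellation making the $(3,1)$ entry (rank contribution to $\chn_1$) vanish, so no gaps remain.
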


\begin{Prop}
The matrix of $\FM^{\iota_* \Pc_d} (E)$ is:
$$ \begin{bmatrix}
0&\fdeg&0 \\
-\hat \sigma &
-1 + (d+1)\hat \sigma \circ \fdeg 
+ \hat f \circ \deg ( (2d+1) \hat f + \hat \sigma) &
\hat f \circ \deg \\
1 & (d+1)(d-2)\hat f - \hat \sigma & d
\end{bmatrix}
$$
$$ \begin{bmatrix}
 0 & 1      & 0 & 0 \\
 -1& d      & 0 & 0 \\
 0 & 2d-1   & 0 & 1 \\
 1 & d^2-d  & -1 & d \\
\end{bmatrix}
$$
\end{Prop}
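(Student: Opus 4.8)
The plan is to assemble the Proposition from the five lemmas just proved, so that no genuinely new computation is needed beyond one matrix multiplication. First I would invoke the first lemma of this subsection, which identifies the operator $\ch\FM^{\iota_*\Pc_d}$ on $\Chow X$ with the composition
$$\ch\FM^{\iota_*\Pc_d} \;=\; \big(\ch[\Lc_1]\otimes -\big)\circ\Big(\big[\pi^*\pi_*(-(\sigma))\big] - \big[\Oc_X(\sigma)\otimes -\big]\Big),$$
i.e. one first applies $E\mapsto\pi^*\pi_*(E(\sigma)) - E(\sigma)$ and then multiplies by $\ch[\Lc_1]$. The remaining three lemmas record the matrices of $\Oc_X(\sigma)\otimes-$, of $\pi^*\pi_*(-(\sigma))$ and of $\Lc_1\otimes-$ in the two bookkeeping conventions used throughout: the codimension-graded, operator-valued $3\times3$ convention on $\Chow X = \Chown^0 X\oplus\Chown^1 X\oplus\Chown^2 X$, and the integral $4\times4$ convention obtained by restricting to the numerical sublattice with basis $1,\sigma,f$ and the point class. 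Since passing to matrices turns composition into matrix product, with the left factor acting last, the matrix of $\FM^{\iota_*\Pc_d}$ is, in each convention,
$$M_{\Lc_1}\cdot\big(M_{\pi^*\pi_*(-(\sigma))} - M_{\Oc_X(\sigma)}\big),$$
and the rest is matrix arithmetic.

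Next I would carry out this product. For the $4\times4$ version it is a routine integer matrix computation: subtract the two recorded $4\times4$ matrices and left-multiply by the $4\times4$ matrix of $\Lc_1\otimes-$; reading off the entries gives the claimed $4\times4$ matrix, and this also serves as a numerical cross-check of the $3\times3$ answer. For the $3\times3$ version one computes the entry in position $(i,k)$ as $\sum_j (M_{\Lc_1})_{ij}\circ\big(M_{\pi^*\pi_*(-(\sigma))} - M_{\Oc_X(\sigma)}\big)_{jk}$, which is legitimate because the entry in position $(i,j)$ raises codimension by $i-j$, so each composition lands in the right graded piece. One then simplifies every codimension-two contribution using the intersection relations on an elliptic K3 surface with a section, $\sigma^2=-2$, $\sigma\cdot f=1$, $f^2=0$ (so $\Chown^2 X$ is numerically one-dimensional), together with the tautology $\fdeg=\deg\circ\hat f$ (whence $\hat f\circ\deg\circ\hat f=\hat f\circ\fdeg$). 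For instance, the $(3,2)$ entry collapses to $\big((d+1)^2-3(d+1)\big)\hat f-\hat\sigma=(d+1)(d-2)\hat f-\hat\sigma$, and the $(3,3)$ entry to $(d+1)-1=d$; the remaining entries are obtained the same way.

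The only real obstacle is the bookkeeping in this last, operator-valued multiplication in the $3\times3$ convention: keeping track of which graded piece each entry maps between, keeping the order of composition straight (the left matrix acts second), and correctly performing the collapses that turn products of $\hat\sigma$, $\hat f$, $\fdeg$ and $\deg$ into scalars on $\Chown^2 X$. Everything else is mechanical, and the parallel $4\times4$ computation provides a built-in consistency check against transcription and sign errors.
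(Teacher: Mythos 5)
Your proposal is correct and is exactly the intended derivation: the Proposition follows by composing the matrices of the preceding lemmas as $M_{\Lc_1}\cdot\bigl(M_{\pi^*\pi_*(-(\sigma))}-M_{\Oc_X(\sigma)}\bigr)$, which is what the paper's arrangement of the lemmas presupposes (it records no separate proof). I checked the arithmetic in both conventions, including your simplifications of the $(3,2)$ and $(3,3)$ entries using $\sigma^2=-2$, $\sigma\cdot f=1$, $f^2=0$ and $\fdeg=\deg\circ\hat f$, and it reproduces the stated matrices.
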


\begin{Lem}
Let $E \in \Db X$. Then the Chern character of
$\FM^{\iota_* \Fc_d} (E)$ is equal to:
$$ \ch \left[ \FM^{\iota_* \Fc_d} (E) \right] =
\ch \left[ \FM^{\iota_* \Pc_d} (E) \right] +
\ch \Big(
\pi^* \pi_*
[
E \otimes p_{2*} \Pc_d \otimes \omega_\pi
] \Big)
.$$
\end{Lem}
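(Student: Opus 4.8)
The plan is to run the universal extension
$$0 \to p_2^* \left( p_{2*} \Pc_d \otimes \omega_\pi \right) \to \Fc_d \to \Pc_d \to 0$$
through the Fourier-Mukai formalism and read off Chern characters, using that $\ch$ factors through $\K X$ and is additive on distinguished triangles. First I would push this short exact sequence forward along $\iota$ to get a distinguished triangle on $X \times X$, namely
$$\iota_* p_2^*\left( p_{2*}\Pc_d \otimes \omega_\pi \right) \to \iota_* \Fc_d \to \iota_* \Pc_d \to [1],$$
which is exact because $\iota$ is a closed immersion (so $\iota_*$ is exact). Then I would apply the functor $\FM^{(-)}(E) = q_{1*}\big( (-) \otimes q_2^* E \big)$; since $q_{1*}$ and $\otimes q_2^* E$ are exact triangulated functors, the result is again a distinguished triangle, and therefore
$$\ch\left[ \FM^{\iota_* \Fc_d}(E) \right] = \ch\left[ \FM^{\iota_* \Pc_d}(E) \right] + \ch\left[ \FM^{\iota_* p_2^*( p_{2*}\Pc_d \otimes \omega_\pi )}(E) \right].$$

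The remaining task is to identify the second summand on the right with $\ch\big( \pi^* \pi_* [ E \otimes p_{2*}\Pc_d \otimes \omega_\pi ] \big)$. For this I would unwind the definition of the Fourier-Mukai transform with this particular kernel. Writing $\Gc \deq p_{2*}\Pc_d \otimes \omega_\pi$, a sheaf on $X$, the kernel is $\iota_* p_2^* \Gc$; using the base-change / commutativity identity already recorded in the excerpt (the canonical isomorphism $p_{1*}(\Pc_d \otimes p_2^* E) = q_{1*}(\iota_* \Pc_d \otimes q_2^* E)$, applied now with $p_2^*\Gc$ in place of $\Pc_d$), one gets
$$\FM^{\iota_* p_2^* \Gc}(E) = q_{1*}\big( \iota_* p_2^* \Gc \otimes q_2^* E \big) = p_{1*}\big( p_2^* \Gc \otimes p_2^* E \big) = p_{1*} \, p_2^* \big( \Gc \otimes E \big),$$
where the last step is the tensor-compatibility of pullback. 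Now $p_1, p_2 : \Pi \to X$ are the two projections of the fiber square $\Pi = X \times_{\PS^1} X$, so the square
\begin{diagram}
\Pi & \rTo^{p_2} & X \\
\dTo^{p_1} & & \dTo_{\pi} \\
X & \rTo^{\pi} & \PS^1
\end{diagram}
is Cartesian, and flat base change gives $p_{1*} \, p_2^* = \pi^* \pi_*$ on $\Db X$ (using that $\pi$ is flat, or at worst after a $\ch$-level comparison via Grothendieck-Riemann-Roch as in the earlier Fact). Hence $\FM^{\iota_* p_2^* \Gc}(E) = \pi^* \pi_*\big( E \otimes \Gc \big) = \pi^* \pi_*\big( E \otimes p_{2*}\Pc_d \otimes \omega_\pi \big)$, and taking $\ch$ finishes the identification.

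The main obstacle I anticipate is justifying $p_{1*} p_2^* = \pi^* \pi_*$ cleanly at the level of objects (not just Chern characters): the square is Cartesian but $\pi$ has singular fibers, so one must be a little careful about which base-change statement applies — either invoke flatness of $\pi$ and ordinary flat base change, or, as is done elsewhere in this section, retreat to the Chow-ring level and use Grothendieck-Riemann-Roch together with properness so that $p_{1*} = p_{1!}$. A secondary, purely bookkeeping point is to make sure the twist by $\omega_\pi = \omega_{p_2}$ and the relative-duality identification $p_{2*}(\Pc_d^\vee) \cong (p_{2*}\Pc_d \otimes \omega_\pi)^\vee$ (already established in the preceding lemma) are threaded through consistently so that the sheaf appearing in the statement is literally $p_{2*}\Pc_d \otimes \omega_\pi$ and not a dual or a shift of it; since we only need the additive contribution to $\ch$, these finite ambiguities do not affect the final formula once the exact sequence is fixed.
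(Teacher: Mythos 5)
Your argument is correct and is essentially the intended one (the paper states this lemma without writing out a proof): push the defining extension of $\Fc_d$ forward along the closed immersion $\iota$, use additivity of $\ch$ on the resulting distinguished triangle, and identify the third term via the projection formula and base change for the Cartesian square formed by $p_1, p_2 : \Pi \to X$ and $\pi : X \to \PS^1$. The caution about singular fibers is unnecessary: $\pi$ is flat (a surjective morphism from an integral surface onto a smooth curve with one-dimensional fibers), so derived flat base change gives $p_{1*}\, p_2^* = \pi^* \pi_*$ at the level of objects, not just Chern characters.
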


\begin{Prop}
The matrix of $[p_{2*} \Pc_d \otimes \omega_\pi] \otimes -$ is:
$$ \begin{bmatrix}
d &0&0 \\
-\hat \sigma+(d^2+d)\hat f &
d & 0 \\
-2d-1 & 
-\hat \sigma+(d^2+d)\hat f & d
\end{bmatrix}
$$
$$ \begin{bmatrix}
 d      & 0          & 0 & 0 \\
 -1     & d          & 0 & 0 \\
 d^2+d  & 0          & d & 0 \\
 -2d-1  & d^2+d+2    & -1 & d \\
\end{bmatrix}
$$
\end{Prop}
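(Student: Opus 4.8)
The plan is to identify the asserted matrix with that of the $\Chow X$-endomorphism $\alpha\mapsto\ch(V)\cdot\alpha$ for $V\deq p_{2*}\Pc_d\otimes\omega_\pi$, written in the coordinates ``rank, the $\sigma$- and $f$-components of $\cn_1$, $\chn_2$'', and then to pin down $\ch(V)\in\Chow X$. As in the earlier computations of $[\Lc_1]\otimes-$ and $[\Oc_X(\sigma)]\otimes-$, once $\ch(V)$ is known the matrix is just the mechanical ``multiply by a fixed class'' table, using $\sigma^2=-2\pt$, $\sigma\cdot f=\pt$, $f^2=0$ on the K3 surface $X$. Since $\omega_X=\Oc_X$, we have $\omega_\pi=\omega_X\otimes\pi^*\omega_{\PS^1}^\vee=\Oc_X(2f)$, so $\ch\omega_\pi=1+2f$ and it is enough to compute $\ch(p_{2*}\Pc_d)$ and multiply by $1+2f$.

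To compute $\ch(p_{2*}\Pc_d)$ I would transfer everything off the (mildly singular) threefold $\Pi$ onto the smooth fourfold $X\times X$ via $\iota$ and push forward along the smooth projective morphism $q_2$. Starting from $\iota_*\Pc_d=\bigl(\iota_*\Oc_\Pi(-\Delta)\bigr)\otimes q_1^*\Lc_1\otimes q_2^*\Lc_2$ (recorded just before the statement, with $\Oc_\Pi(-\Delta)$ read as the ideal sheaf $\Ic_\Delta$ of the diagonal — this is the right reading, since $\Delta$ is only a Weil divisor of $\Pi$ at its finitely many singular points), tensor the exact sequence
\begin{equation*}
0\to\iota_*\Oc_\Pi(-\Delta)\to\iota_*\Oc_\Pi\to\Oc_{\Delta_{X\times X}}\to 0
\end{equation*}
of sheaves on $X\times X$ by the line bundle $q_1^*\Lc_1\otimes q_2^*\Lc_2$ and apply $q_{2*}$. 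Using $\iota_*\Oc_\Pi\otimes q_1^*\Lc_1=\iota_*(p_1^*\Lc_1)$ (projection formula and $q_1\iota=p_1$), the projection formula again on the middle term, and the fact that $q_2$ restricts to an isomorphism $\Delta_{X\times X}\xrightarrow{\sim}X$ on the outer term, the resulting triangle yields, in $\K X$ and hence in $\Chow X$,
\begin{equation*}
\ch(p_{2*}\Pc_d)=\ch\bigl(p_{2*}(p_1^*\Lc_1)\bigr)\cdot\ch\Lc_2-\ch\Lc_1\cdot\ch\Lc_2,
\end{equation*}
where on the left $p_{2*}\Pc_d$ is the rank $d$ vector bundle already identified, and $\Lc_1=\Oc_X((d+1)\sigma+2(d+1)f)$, $\Lc_2=\Oc_X(\sigma)$.

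The remaining input is $\ch\bigl(p_{2*}(p_1^*\Lc_1)\bigr)=\ch\bigl(q_{2*}(\iota_*\Oc_\Pi\otimes q_1^*\Lc_1)\bigr)$, which I would extract from the Koszul resolution $0\to\Oc_{X\times X}(-\Pi)\to\Oc_{X\times X}\to\iota_*\Oc_\Pi\to 0$, noting $\Oc_{X\times X}(-\Pi)=q_1^*\Oc_X(-f)\otimes q_2^*\Oc_X(-f)$ because $\Pi=(\pi\times\pi)^{-1}(\Delta_{\PS^1})$. After tensoring by $q_1^*\Lc_1$ and applying $q_{2*}$, flat base change over a point turns the two terms into $\chi(X,\Lc_1)\cdot\Oc_X$ and $\chi(X,\Lc_1(-f))\cdot\Oc_X(-f)$; Riemann--Roch on the K3 surface gives $\chi(\Lc_1)=2+(d+1)^2$ and $\chi(\Lc_1(-f))=2+d(d+1)$, so $\ch\bigl(p_{2*}(p_1^*\Lc_1)\bigr)=(d+1)+(d^2+d+2)f$. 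With $\ch\Lc_1=1+\bigl((d+1)\sigma+2(d+1)f\bigr)+(d+1)^2\pt$ and $\ch\Lc_2=1+\sigma-\pt$, the two displays combine to $\ch(p_{2*}\Pc_d)=d-\sigma+(d^2-d)f+(1-2d)\pt$, hence $\ch(V)=\ch(p_{2*}\Pc_d)(1+2f)=d-\sigma+(d^2+d)f+(-2d-1)\pt$, and the ``multiply by $\ch(V)$'' table is exactly the two claimed matrices.

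I expect the only genuinely delicate point, as opposed to bookkeeping, to be handling $\Pi$ correctly: it is singular (ordinary threefold double points over the nodal fibers of $\pi$), $\Delta\subset\Pi$ fails to be Cartier there, so one must use the ideal sheaf and resist applying Grothendieck--Riemann--Roch on $\Pi$ itself — routing through $X\times X$, together with the flatness of $\Pc_d$ over $X$ that already underlies ``$p_{2*}\Pc_d$ is a rank $d$ bundle'', is what keeps the argument clean. Everything after that is the determined, if slightly lengthy, manipulation of Chern characters on $X\times X$ and on the K3 surface $X$.
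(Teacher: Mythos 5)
Your proposal is correct, and in fact the paper states this Proposition (like the other matrix computations of \S\ref{Subsec:matrices:action}) without any proof, so there is no argument of the author's to compare yours against; your write-up would fill a genuine gap. I checked the numbers: with the reading $c_1 = a\sigma+bf$ and coordinates (rank, $\sigma$-coefficient, $f$-coefficient, point class), your value $\ch(p_{2*}\Pc_d)=d-\sigma+(d^2-d)f+(1-2d)\,\mathrm{pt}$, hence $\ch\bigl(p_{2*}\Pc_d\otimes\omega_\pi\bigr)=d-\sigma+(d^2+d)f-(2d+1)\,\mathrm{pt}$, reproduces exactly the stated $4\times 4$ matrix and its $3\times 3$ operator form, and your intermediate Euler characteristics $\chi(\Lc_1)=2+(d+1)^2$, $\chi(\Lc_1(-f))=2+d(d+1)$ are right. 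Two independent consistency checks confirm the answer: for $d=1$ the sheaf $p_{2*}\Pc_1$ is a line bundle and your class $1-\sigma-\mathrm{pt}$ indeed satisfies $\chn_2=\tfrac12 c_1^2$; and composing the paper's matrix for $\pi^*\pi_*(-)$ with your matrix gives precisely the difference of the displayed matrices for $\FM^{\iota_*\Fc_d}$ and $\FM^{\iota_*\Pc_d}$, as required by the Lemma relating them. Your routing through $X\times X$ -- the ideal-sheaf sequence for $\Delta\subset\Pi$ together with the Koszul resolution of $\Oc_\Pi$ as a Cartier divisor of class $q_1^*f+q_2^*f$ -- is exactly the kind of argument the surrounding text suggests (it uses the recorded decomposition of $\iota_*\Pc_d$), and it correctly avoids doing intersection theory on the singular threefold $\Pi$. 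The one point I would tighten is the identification $\Oc_\Pi(-\Delta)=\Ic_{\Delta/\Pi}$: your reflexivity argument is the standard one for ordinary double points, which covers the generic situation of nodal fibers, but if one allows other Kodaira fiber types the singularities of $\Pi$ are different and that identification should be rechecked (or the definition of $\Pc_d$ read as the ideal-sheaf twist from the start); since your computation only uses the ideal-sheaf sequence and the Cartier property of $\Pi$ in $X\times X$, the matrices are unaffected under that reading.
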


\begin{Prop}
The matrix of $\FM^{\iota_* \Fc_d} (E)$ is:
$$ \begin{bmatrix}
-1  & (d+1) \fdeg   & 0 \\
-\hat \sigma &
-1 + \Big(
    (d+1)\hat \sigma + (d+1)^2 \hat f
    \Big) \circ \fdeg &
(d+1) \hat f \circ \deg \\
1 &  - \hat \sigma + (d^2-d-2)\hat f & d
\end{bmatrix}
$$
$$ \begin{bmatrix}
 -1& d+1    & 0 & 0 \\
 -1& d      & 0 & 0 \\
 0 & (d+1)^2& -1 & d+1 \\
 1 & d^2-d  & -1 & d \\
\end{bmatrix}
$$
\end{Prop}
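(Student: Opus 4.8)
The plan is to read the answer off the additivity statement established in the Lemma immediately preceding this Proposition, namely
$$\ch\left[\FM^{\iota_*\Fc_d}(E)\right]
= \ch\left[\FM^{\iota_*\Pc_d}(E)\right]
+ \ch\Big(\pi^*\pi_*\big[E\otimes p_{2*}\Pc_d\otimes\omega_\pi\big]\Big).$$
The first summand is precisely the operator whose matrix was computed in the Proposition on $\FM^{\iota_*\Pc_d}$, so it contributes that matrix verbatim; all that remains is to compute the matrix of the second summand and add.

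For the second summand I would factor the endomorphism $E\mapsto\pi^*\pi_*(E\otimes p_{2*}\Pc_d\otimes\omega_\pi)$ of $\Chow X$ as the composite of tensoring by $[p_{2*}\Pc_d\otimes\omega_\pi]$ followed by $\pi^*\pi_*(-)$. The matrix of the first factor is supplied by the Proposition computing the matrix of $[p_{2*}\Pc_d\otimes\omega_\pi]\otimes-$, and the matrix of $\pi^*\pi_*(-)$ is supplied by the corresponding Lemma above; the matrix of the composite is then their product, taken in the order ``$\pi^*\pi_*$ after tensoring''. Performing this matrix multiplication, once in the $3\times3$ operator form on $\Chow^0 X\oplus\Chown^1 X\oplus\Chow^2 X$ and once in the $4\times4$ integral form in the basis consisting of $1$, $\sigma$, $f$ and the point class, and then adding the outcome to the matrix of $\FM^{\iota_*\Pc_d}$, produces the two displayed matrices after simplification.

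The only real obstacle is bookkeeping. The entries of the operator-valued matrices are not scalars but compositions of $\fdeg$, $\deg$ and of multiplication by $\hat\sigma$ and $\hat f$, which neither commute nor preserve degree in $\Chow X$, so one must track the grading carefully — for instance making sure that $\hat f\circ\deg$, $\hat f\circ\fdeg$ and the bare $\fdeg$ land in the intended slots — and repeatedly use the elliptic-surface-with-section relations $\sigma.f=1$, $f^2=0$, $\sigma^2=-2\,[\mathrm{pt}]$ to reduce. I would include two consistency checks: first, that the rank/fiber-degree sub-block (rows and columns $1$ and $2$ of the $4\times4$ matrix, since $\fdeg$ reads off the $\sigma$-coefficient of $c_1$) equals $\begin{pmatrix}-1 & d+1\\ -1 & d\end{pmatrix}$, which has determinant $1$ and has the shape $\begin{pmatrix}\ast & a\\ \ast & b\end{pmatrix}$ predicted by Theorem \ref{Thrm:Br_birat} for a kernel parametrizing rank $a=d+1$, degree $b=d$ fiber sheaves; and second, that the $4\times4$ matrix reproduces the $3\times3$ one upon expanding arbitrary classes in the basis $1,\sigma,f,[\mathrm{pt}]$.
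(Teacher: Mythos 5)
Your proposal is correct and matches the paper's (implicit) argument exactly: the paper states this Proposition without a written proof, the intended derivation being precisely the additivity lemma for $\ch\left[\FM^{\iota_*\Fc_d}(E)\right]$ combined with the already-computed matrices of $\FM^{\iota_*\Pc_d}$, of $[p_{2*}\Pc_d\otimes\omega_\pi]\otimes-$, and of $\pi^*\pi_*(-)$, multiplied in the order you specify and then added. The arithmetic indeed closes up (e.g.\ the $(3,2)$-entry $(2d-1)+(d^2+2)=(d+1)^2$), and your two consistency checks, including the $\begin{pmatrix}-1 & d+1\\ -1 & d\end{pmatrix}$ rank/fiber-degree block of determinant $1$, are sound.
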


\printbibliography[
    heading=bibintoc,
    title={Bibliography}
    ]

\end{document}